\newtheorem{theorem}{Theorem}
\newtheorem{proposition}[theorem]{Proposition}
\newtheorem{lemma}[theorem]{Lemma}
\newtheorem{definition}{Definition}
\newtheorem{example}{Example}
\newtheorem{remark}{Remark}
\newtheorem{problem}{Problem}
\newcommand{\bC}{\mathbb{C}}
\newcommand{\ee}{\end{equation}}
\begin{document}
          \numberwithin{equation}{section}

          \title[ Real zeros for polynomials obeying a five-term recurrence]
          {Special 5-term recurrence relations, banded Toeplitz matrices, and reality of zeros
}    
         \author[I.~Ndikubwayo]{Innocent Ndikubwayo}
\address{Department of Mathematics, Stockholm University, SE-106 91
Stockholm,         Sweden}
\email {innocent@math.su.se, ndikubwayo@cns.mak.ac.ug}



\keywords{ recurrence relation, Banded Toeplitz matrix, hyperbolic} 
\subjclass[2010]{Primary 12D10,\; Secondary   26C10, 30C15}

\maketitle

\begin{abstract} 
Below we establish the conditions guaranteeing the  reality of all the zeros of polynomials $P_n(z)$ in the  polynomial sequence  $\{P_n(z)\}_{n=1}^{\infty}$ satisfying a five-term recurrence relation
 $$P_{n}(z)= zP_{n-1}(z) + \alpha P_{n-2}(z)+\beta P_{n-3}(z)+\gamma P_{n-4}(z),$$ with
the standard initial conditions $$P_0(z) = 1, P_{-1}(z) = P_{-2}(z) =P_{-3}(z) = 0,$$ where $\alpha, \beta, \gamma$  are  real coefficients, $\gamma\neq 0$ and $z$ is a complex variable.
We interprete this sequence of polynomials as principal minors of an appropriate banded  Teoplitz matrix whose associated Laurent polynomial $b(z)$ is holomorphic in  $\mathbb{C}\setminus \{0\}$.  We show that when either the critical points of $b(z)$ are all real; or when they are two real and one pair of complex conjugate critical points with some extra conditions on the parameters, the set $b^{-1}(\mathbb{R})$ contains  a Jordan curve with $0$ in its interior and in some cases a non-simple curve enclosing $0$. The presence of the said  curves is necessary and sufficient for every polynomial in the sequence $\{P_n(z)\}_{n=1}^{\infty}$ to be hyperbolic (real-rooted).

\end{abstract}

\section{Introduction and Basic notation}

It is a natural problem to investigate location of zeros of polynomials defined by linear 
recursive relations. A classic  and extensively studied recursion is the three-term recursive formula since it provides a necessary condition for a sequence of polynomials to be orthogonal. 
Orthogonal polynomial sequences have many interesting properties and applications. In particular, every polynomial in the orthogonal polynomial sequence is real-rooted and its zeros lie on some interval of orthogonality,  for details see \cite{MG, AMS}.
However, not much is known about the location of zeros of polynomials generated by five-term recurrences. In this work, we study the zeros of polynomials $P_n(z)$ generated by a certain five-term recurrence and we are interested in situations where each of the polynomials in $\{P_n(z)\}_{n=1}^{\infty}$ has all  real roots. This particular relation is connected to banded Toeplitz matrices.

Toeplitz matrices and operators arise in many areas  of pure and applied mathematics for example, probability theory, time series analysis, harmonic analysis, signal theory and image processing among others \cite{DAB}.
A matrix $T$ is called Toeplitz (of finite, semi-infinite or bi-infinite order) if  $T$  has entries that are constant along its diagonals. By defining $T$ entrywise, we have $T_{ij}= a_{i-j},~~~~ \forall~~ i, j =0,1,\dots,$.  A semi-infinite matrix of the same form is known as a Toeplitz operator, and a bi-infinite matrix of this kind is called a Laurent operator \cite{LNTM}. Now from the way $T$ is defined, it becomes clear that $T$ is completely determined by its entries in the first row and the first column i.e., the set
\begin{eqnarray} \label{lovenc}
\{ a_k \}_{k={-\infty}}^{\infty} = \{ \dots, a_{-2}, a_{-1}, a_0, a_1, a_2, \dots\}.
\end{eqnarray}
In general, these entries $a_k$ are complex numbers.
A Toeplitz matrix $T$ is said to be\textit{ banded} if there is an integer \textbf{$d$} such that $a_\ell=0$ whenever $|\ell| > d.$ In this case, we say that $T$ is a banded Toeplitz matrix with bandwidth $d$. To put it differently, a Toeplitz matrix is banded if and only if at most finitely many of the numbers in \eqref{lovenc} are nonzero. 
\begin{definition}
The \textit{symbol} of a Toeplitz matrix or Toeplitz operator or Laurent 
operator is the function of the form
 \begin{eqnarray*}\label{stella11}
f_T(z)= \sum_{k} a_k z^k.
\end{eqnarray*}
This is a finite sum  or an infinite series depending on the context.
\end{definition}
\begin{example}  The matrix
\begin{equation}\label{ind1}
T=
\begin{pmatrix}
    0 & 2i & -1 & 2 & 0 & 0\\
    0 & 0 & 2i & -1 & 2 & 0 \\
    -4 & 0 & 0 & 2i & -1 & 2 \\
    -2i & -4 & 0 & 0 & 2i & -1\\
    0 & -2i & -4 & 0 & 0 & 2i \\
    0 & 0 & -2i & -4 & 0 & 0 \\
\end{pmatrix}
\end{equation}
is a $6 \times 6 $ banded  Toeplitz matrix with bandwidth $3$.\end{example}  
Associated  to $T$  in \eqref{ind1} is the symbol  
\begin{equation}\label{Ins}
f_T(z)=\frac{2}{z^3}-\frac{1}{z^2}+\frac{2i}{z}-4z^2-2iz^3.
\end{equation}
Since the matrix $T$ defined  in (\ref{ind1}) is banded, the symbol $f_T(z)$ is a finite linear combination of positive and negative powers of $z$, also referred to as  a \textit{Laurent polynomial.} We can think of $f_T(z)$ as a complex-valued function, thus in \eqref{Ins}, $f_T(z)$ is holomorphic in $\mathbb{C}\setminus \{0\}$. \\
Consider a Laurent polynomial  of the form
\begin{eqnarray}\label{stella11a}
b(z)= \sum_{k=-r}^{s} a_k z^k  ~~~~\mbox{with}~~~~a_{-r}a_s \neq 0 ~~~~\mbox{and}~~~~ r,s \in \mathbb{N}
\end{eqnarray}
where $a_k$ are complex numbers.
In what follows, we use the notation $T_n$ to denote an $n\times n$ finite section of the infinite dimensional Toeplitz matrix $T$ and write $T_n(b)$ for the matrix $T_n$ whose associated Laurent polynomial is $b(z).$   

The study of the asymptotic behaviour of the determinant of $T_n(b)$ as $n\to \infty$ goes back to the work of Szeg{\H o} \cite{GS1, GS2} but the more recent progress is contained  in the work of Schmidt and Spitzer \cite{KD}, where they characterize the limiting behaviour of the spectrum of $T_n(b)$ as follows:

Denote by $\operatorname{spec}(T_n(b))$, the spectrum of $T_n(b)$. Then 
$$\operatorname{spec}(T_n(b)):= \{ \lambda \in \mathbb{C}: \operatorname{det}(T_n(b)-\lambda I_n)=0 \},$$
where $I_n$ is an  $n \times n$ identity matrix.
The eigenvalue counting measures of  $T_n(b)$ are given by
$$\mu_n:=\frac{1}{n}\sum_{k=1}^n\delta_{\lambda_{k,n}},$$
where $\lambda_{1,n}, \lambda_{2,n}, \dots, \lambda_{n,n}$ are the eigenvalues of $T_n(b)$ (repeated according to their algebraic multiplicity) and $\delta_a$ is the Dirac measure supported at $\{a\}$. The sequence $\{\mu_n\}$ converges weakly to a limiting measure $\mu$, as $n \to \infty$. By definition, the limiting measure $\mu$ is supported on the limiting set
$$\wedge(b):= \{ \lambda \in \mathbb{C}:\lim_{n \to \infty} \operatorname{dist}(\lambda, \operatorname{spec}(T_n(b)))=0 \},$$
where $\operatorname{dist}(x,y)$ is the distance between $x$ and $y$, see \cite{KD} for details.

\begin{definition}\cite{GM} 
A curve $\Upsilon \subset \mathbb{C}$ is a continuous mapping from a closed interval to a topological space. We say that $\Upsilon$  is an analytic curve/ arc if every point of $\Upsilon$ has an open neighbourhood  $U$ for which there is an onto conformal map $h: \mathbb{T} \to U$ (where $ \mathbb{T} \subset \mathbb{C}$ is the unit disc) such that $\mathbb{T}\cap \mathbb{R}$ is mapped by $h$ onto $\mathbb{U}\cap \Upsilon$. 
A point  $\lambda  \in \wedge(b)$ is called an exceptional point if  either $\lambda$ is a branch point or if there is no open neighborhood $V$ of $\lambda$ such that $\wedge(b) \cap V$ is an analytic arc starting and terminating on the boundary $\partial{V}$. \end{definition}

For a fixed $\lambda \in \bC$, define a polynomial $R(z)=z^r(b(z)-\lambda)$. Further, let
\begin{eqnarray*}\label{eq:Ga2}
z_1(\lambda)\ge z_2(\lambda)\ge \ldots \ge z_r(\lambda)\ge z_{r+1}(\lambda)\ldots \ge z_{r+s}(\lambda)>0
\end{eqnarray*}
 be the $(r+s)$-tuple of the absolute values of all  (not necessarily distinct) roots of $R(z)$ in a non-increasing order.  Schmidt and Spitzer \cite{KD} gave a description of the limiting set $\wedge(b)$ as 
\begin{eqnarray}\label{eq:Gax}
\wedge(b):= \{\lambda \in \bC\;  |\;  z_r(\lambda)=z_{r+1}(\lambda)\}.
\end{eqnarray}
If among the roots of $R(z)$ there is a chain of equalities, then the labelling of roots within this chain is arbitrary.

From \eqref{eq:Gax}, we can deduce a topological description of the set  $\wedge(b)$ as a disjoint union of a finite number of (open) analytic curves and a finite number of exceptional points. Moreover, $\wedge(b)$ has no isolated points and is both compact and connected set (refer to \cite{MDA,BAG} for more details).
Valuable  references containing many results on the spectral properties of band Toeplitz matrices are the books of Böttcher and Silbermann \cite{TR1, TR2, TR3}.  Widom \cite{HW} also gives such an impressive introduction to this subject area.

In 2017, a new approach in understanding the spectral properties of band Toeplitz matrices was successfully used by Shapiro and Stampach \cite{BSF}  in the study of a class of complex semi-infinite banded Toeplitz matrices, whose spectra of their principal submatrices accumulate onto a real interval as the size of their submatrices grows to infinity. They proved that, a banded Toeplitz matrix belongs to the said class if and only if its Laurent polynomial has real values on a Jordan curve located in $\mathbb{C}\setminus\{0\}$, (where by a Jordan curve we mean a non-self- intersecting closed curve). Moreover, with the presence of such a Jordan curve, the spectra of all of the corresponding principal submatrices have to be real. 
 
Let $b(z)$ be the Laurent polynomial defined in \eqref{stella11a} and  $b^{-1}(\mathbb{R})$ be the pre-image of the real axis under $b(z)$. We now  state a theorem which will be central to our work. The details of its proof can be found in the same paper. 

\begin{theorem}\cite{BSF} \label{symb2}
Let $b(z)$ be a Laurent polynomial defined in \eqref{stella11a}. The following statements
are equivalent:\\

(i) $\wedge(b) \subset \mathbb{R}$;\\

(ii) the set $b^{-1}(\mathbb{R})$ contains (an image of) a Jordan curve;\\

(iii) for all $n \in \mathbb{N}$, $\operatorname{spec}(T_n(b)) \subset \mathbb{R}$.
\end{theorem}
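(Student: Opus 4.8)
The plan is to prove the cycle of implications $(iii)\Rightarrow(i)\Rightarrow(ii)\Rightarrow(iii)$. The implication $(iii)\Rightarrow(i)$ is immediate from the definition: $\wedge(b)$ is the set of accumulation points of $\bigcup_n\operatorname{spec}(T_n(b))$, so if every finite spectrum is real then $\wedge(b)\subset\mathbb R$.

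For $(i)\Rightarrow(ii)$ I would work on the Riemann sphere, viewing $b$ as a rational map $\hat b\colon\mathbb{CP}^1\to\mathbb{CP}^1$ of degree $r+s$ with a pole of order $r$ at $0$ and a pole of order $s$ at $\infty$. Then $\Gamma:=\hat b^{-1}(\mathbb R\cup\{\infty\})$ is a finite graph on the sphere containing the vertices $0$ and $\infty$, with $b^{-1}(\mathbb R)=\Gamma\setminus\{0,\infty\}$, and each connected component of $\mathbb{CP}^1\setminus\Gamma$ is carried by $\hat b$ as a branched covering onto the open upper or the open lower half-plane. Note that any Jordan curve contained in $b^{-1}(\mathbb R)$ automatically encloses $0$, since otherwise $b$ would be holomorphic and bounded inside it with real boundary values, hence constant. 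Thus the existence of a Jordan curve in $b^{-1}(\mathbb R)$ is equivalent to the existence of a cycle of $\Gamma$ separating $0$ from $\infty$ on the sphere, and this fails exactly when some complementary region $F$ of $\Gamma$ has both $0$ and $\infty$ on its boundary. I would rule out the latter using $\wedge(b)\subset\mathbb R$ together with the Schmidt--Spitzer description \eqref{eq:Gax}: near $0$ the set $b^{-1}(\mathbb R)$ consists of $2r$ arcs bounding $r$ sectors on which $\operatorname{Im}b>0$ and $r$ on which $\operatorname{Im}b<0$, and similarly $2s$ arcs and sectors at $\infty$; moreover for every $\lambda$ with $\operatorname{Im}\lambda\neq0$ (hence $\lambda\notin\wedge(b)$) there is a circle about $0$, meeting no zero of $z^r(b(z)-\lambda)$, whose interior contains a fixed number of these $r+s$ zeros — a number that is locally constant on the connected set $\{\operatorname{Im}\lambda>0\}$. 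If a region $F$ lying over the upper half-plane met both $0$ and $\infty$, one could run along $F$ from a sector at $0$ to a sector at $\infty$ and follow the corresponding branch of $\hat b^{-1}|_F$; as $\lambda\to\infty$ along the two resulting escape directions this branch would tend to $0$ along one and to $\infty$ along the other, forcing the above count to jump inside $\{\operatorname{Im}\lambda>0\}$, which is impossible. Carrying out this combinatorial bookkeeping carefully — matching the local sectors at $0$ and at $\infty$, handling ramified complementary regions and cut points of $\Gamma$, and pinning down precisely which count is preserved — is the step I expect to be the main obstacle.

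For $(ii)\Rightarrow(iii)$, let $\gamma\subset b^{-1}(\mathbb R)$ be a Jordan curve; by the remark above $0$ lies in its interior, and $b(\gamma)=[m,M]\subset\mathbb R$ is a compact interval. Since $b$ is holomorphic on $\mathbb C\setminus\{0\}$ and $\gamma$ winds once about $0$, deforming the defining contour gives $a_{j-k}=\frac1{2\pi i}\oint_\gamma b(z)\,z^{k-j-1}\,dz$ for all $j,k$, so $T_n(b)$ is the matrix, in the basis $1,z,\dots,z^{n-1}$, of the compression to this polynomial space of the operator of multiplication by $b$ on functions on $\gamma$. Because $b$ is real on $\gamma$ I would build a positive scalar product on polynomials from a suitable positive weight on $\gamma$ — equivalently, a Wiener--Hopf factorization $b-\lambda=c_-c_+$ across $\gamma$ with vanishing partial index for every $\lambda\notin[m,M]$, using that $b-\lambda$ is then zero-free on $\gamma$ and has winding number $0$ along it — with respect to which multiplication by $b$ is self-adjoint; its compression to any finite-dimensional subspace is then self-adjoint, so $T_n(b)$ is similar to a Hermitian matrix and $\operatorname{spec}(T_n(b))\subseteq[m,M]\subset\mathbb R$. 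The genuine content here is establishing positivity of the bilinear form — equivalently, that the factorization exists with the correct index for all $n$ and not merely asymptotically — and this is the technical heart of the implication.
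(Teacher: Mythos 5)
First, a point of comparison that matters: the paper does not prove this statement at all. It is quoted from \cite{BSF}, with the proof explicitly deferred to that reference, so there is no in-paper argument to measure yours against; your proposal must stand as a free-standing proof of the Shapiro--Stampach theorem. Judged that way, it is a reasonable programme rather than a proof: the implication $(iii)\Rightarrow(i)$ is indeed immediate from the definition of $\wedge(b)$, but in each of the two hard implications you stop exactly where the real work begins, and in one of them the mechanism you describe is incorrect as stated.

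In $(i)\Rightarrow(ii)$, the quantity you propose to track cannot ``jump'': for every $\lambda\notin\wedge(b)$ the number of roots of $z^r(b(z)-\lambda)$ lying inside the separating circle is the same fixed number by the very definition of the ordering in \eqref{eq:Gax}, so it is constant on all of $\mathbb{C}\setminus\wedge(b)$ and no contradiction arises from it. What your branch-following argument actually requires is the finer statement that the \emph{partition} of the $r+s$ roots into the inner and the outer group, continued along a path in $\{\operatorname{Im}\lambda>0\}$, cannot change, because a continuously moving root cannot cross the root-free Schmidt--Spitzer annulus unless the gap closes, which under $(i)$ happens only on $\wedge(b)\subset\mathbb{R}$. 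That repair exists, but you also leave unproved the topological dichotomy (no cycle of $\Gamma$ separating $0$ from $\infty$ if and only if some complementary face has both on its boundary), which needs care since $\Gamma=\hat b^{-1}(\mathbb{R}\cup\{\infty\})$ need not be connected, let alone $2$-connected. The gap in $(ii)\Rightarrow(iii)$ is more serious. The pairing that reproduces the entries $a_{j-k}=\frac{1}{2\pi i}\oint_\gamma b(z)\,z^{k-j-1}\,dz$ is the complex \emph{bilinear} Cauchy pairing, under which the monomials are ``orthonormal'' only in a non-Hermitian sense; for a genuine positive inner product $\int_\gamma f\overline{g}\,d\mu$ on a general Jordan curve the monomials are not orthogonal, so the matrix of the compression of multiplication by $b$ in the monomial basis is the inverse Gram matrix times the moment matrix, not $T_n(b)$. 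Likewise, an index-zero Wiener--Hopf factorization of $b-\lambda$ across $\gamma$ (which your winding-number observation does supply for $\lambda\notin\mathbb{R}$) yields invertibility of the semi-infinite Toeplitz operator and asymptotic nonvanishing of $\det T_n(b-\lambda)$, but not $\det T_n(b-\lambda)\neq 0$ for every finite $n$ --- and that non-asymptotic claim is precisely assertion $(iii)$, the surprising content of the theorem. So the ``technical heart'' you set aside is not a positivity check one can expect to verify routinely; it is the theorem itself, and \cite{BSF} has to work by genuinely different means at exactly this point.
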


We observe that Theorem \ref{symb2}  points out a nice feature of banded Toeplitz matrices, $(i)\Rightarrow (iii)$ i.e., the asymptotic reality of the eigenvalues automatically implies that all the eigenvalues of all principal submatrices are real. This is rather an interesting result simply because by \cite{LNTM}, there is no simple characterization of the eigenvalues for general Toeplitz matrices. 

The above theorem motivates the following problem discussed in the present paper.
\begin{problem}\label{prob:main}
{\rm} Consider the recurrence relation of the form
 \begin{eqnarray}\label{christ22}
P_n(z)= zP_{n-1}(z)+\alpha P_{n-2}(z)+\beta P_{n-3}(z)+\gamma P_{n-4}(z),
\end{eqnarray} 
with the standard initial conditions,
 \begin{eqnarray}  \label{almst}
  P_{0}(z)=1, P_{-1}(z)=P_{-2}(z)=P_{-3}(z)=0,
  \end{eqnarray} 
  where $\alpha, \beta, \gamma \in \mathbb{R}, \gamma\neq 0$ and $z$ is a complex variable. Give conditions on $(\alpha,\beta, \gamma)$ such that all the zeros of $P_{n}(z)$ will be real for all positive integer $n$. 
\end{problem}

Putting  $\beta=\gamma=0$, in \eqref{christ22} subject to \eqref{almst}, it is straight forward to show that the resulting 3-term recurrence relation generates real-rooted polynomials precisely when $\alpha \leq 0$. Similarly, in the case when  $\gamma=0$, it is not difficult to show by use of discriminants that the resulting 4-term relation generates real-rooted polynomials if and only if $\alpha \leq 0$ and $27\beta^2 \leq -\alpha^3.$ Moreover, by setting $\gamma=0, \beta=-1$ and $\alpha \in \mathbb{R}$, we obtain the 3-Conjecture of Egecioglu, Redmond and Ryavec \cite{OTC}. In this case, it is proven in \cite{OTC} (for sufficient condition) and later in \cite{BG} (for necessary condition) that for all positive integer $n$, all the zeros of $P_n(z)$ are real if and only $\alpha \leq 3$. In fact in the latter paper, it is also shown that if the inequality is strict, i.e, $\alpha < 3$, then the zeros of $P_{n+1}(z)$ and $P_n(z)$ interlace. To the best of my knowledge, no similar study has been carried out for Problem \ref{prob:main} and it is what we seek to address in this paper.\\
 
 Problem $1$ will be settled by finding out when condition of Theorem 1(ii) holds, and  then using the implication  $(ii)\Rightarrow (iii)$ of the same theorem to obtain the required results.

\medskip
To begin with, we make the following observations: The banded Teoplitz operator $T$ corresponding to the recurrence \eqref{christ22} subject to \eqref{almst} is 
\begin{equation}\label{shamim}
 T =
  \begin{pmatrix}
0 & -1 &  &  &  &  &  & \\
\alpha & 0 & -1 &  & & & & \\
-\beta & \alpha & 0 & -1 &  & & & \\
 \gamma& -\beta & \alpha & 0 & -1 &  & & \\
 &\gamma & -\beta & \alpha & 0 & -1 &  & \\
 & & \ddots& \ddots & \ddots & \ddots &\ddots  & \\
 & & & &  &  &  & \\
 &  &   &  &  &  &  &\ddots\\
\end{pmatrix}
\end{equation}
where the entries in the diagonals that are not shown are zeros. 

The Laurent polynomial associated to $T$ in \eqref{shamim} is 
\begin{equation}\label{shamim22}
b(z)= - \frac{1}{z}+ \alpha z  - \beta z^2 + \gamma z^3.
\end{equation}
It is clear that $b(z)$ is  holomorphic in $\mathbb{C}\setminus \{0\}$. 

\medskip

Let $T_n(b)$ be an $n \times n$ top-left  submatrix of $T$ in \eqref{shamim} associated with the Laurent polynomial $b(z)$ in \eqref{shamim22} and $Y_n(z)$ be its characteristic polynomial. Then, for each $n \in \mathbb{N}$, $Y_n(z)=P_n(z)$ where $P_n(z)$ is a polynomial generated by  the recurrence  \eqref{christ22} subject to \eqref{almst}. Hence from now onwards, we shall write $P_n(z)$ for $Y_n(z)$. Consequently, the zeros of $P_n(z)$ are precisely the eigenvalues of $T_n (b)$ i.e., the zeros of $\operatorname{det}(zI-T_n(b)).$
\medskip

From 
\begin{eqnarray*}\label{Mar1}
b(z)= - \frac{1}{z}+ \alpha z  - \beta z^2+ \gamma z^3,
\end{eqnarray*}
we have 
\begin{eqnarray*}
b'(z)=\frac{3\gamma z^4 -2\beta z^3 +  \alpha z^2+1}{z^2}= \frac{G(z)}{z^2}
\end{eqnarray*}
where 
\begin{eqnarray}\label{Flav1x}
G(z):=3\gamma z^4 -2\beta z^3 +  \alpha z^2+1.
\end{eqnarray}

 Interestingly, it turns out that the zeros of $G(z)$ play an important role in solving Problem $1$. The necessary and sufficient conditions under which all the polynomials $P_n(z)$ are hyperbolic can be stated in terms of the discriminants of a quartic, cubic and quadratic polynomials where the quartic polynomial in question is given by \eqref{Flav1x}. Let $\bigtriangleup(G)$  denote the discriminant of $G(z)$.

We now state the main result of the this paper. 

 \begin{theorem}\label{Main}
{\rm} Consider the five-term linear recurrence relation of the form 
\begin{equation*}\label{christ21}
P_n(z)= zP_{n-1}(z)+\alpha P_{n-2}(z)+\beta P_{n-3}(z)+\gamma P_{n-4}(z),
 \end{equation*} 
  with the standard initial conditions, 
\begin{equation*}
 P_{0}(z)=1, P_{-1}(z)=P_{-2}(z)=P_{-3}(z)=0,
\end{equation*} 
where $\alpha, \beta, \gamma$  are real coefficients, $\gamma \neq 0$  and $z$ is a complex variable. 
Then, for all positive integers $n$, all the zeros of $P_{n}(z)$ are real if and only if one of the  following two conditions is satisfied:\\
Either

{\rm (a)} The polynomial $G(z)=3 \gamma z^4-2\beta z^3+ \alpha z^2 +1$ is hyperbolic which is equivalent to
\begin{eqnarray*}
\alpha<0,~~~~~ -\frac{\alpha^2}{36} \leq \gamma \leq  \frac{\alpha^2}{12}~~\mbox{and}~ \bigtriangleup(G)\geq 0 \end{eqnarray*}
or

{\rm (b)} $G(z)$ has two distinct real zeros with different signs and a pair of complex conjugate zeros and, additionally 
$12\alpha\gamma-\beta^2 \geq 0$ and $64\gamma^3+(4\alpha\gamma-\beta^2)^2\geq 0$.\\

This condition in part $(b)$ is equivalent to
\begin{eqnarray*}\label{labb1}
 \gamma <0,~~~\bigtriangleup(G)<0,~~\bigtriangleup(W)\leq 0 ~~\mbox{and}~~\bigtriangleup(H)\geq 0, \end{eqnarray*}
where 
\begin{eqnarray*}\label{Auntkaduka}
W(z)=\alpha z^2+\beta z+ 3\gamma,
\end{eqnarray*}
and
\begin{eqnarray}\label{Auntkaduka}
H(z)=z^3+\lambda z+ \mu ~~~~~~\mbox{for}~~~~ \lambda= 2^{\frac{4}{3}}\gamma, ~~~ \mu=\frac{4\alpha \gamma -\beta^2}{3\sqrt{3}}.
\end{eqnarray}
\end{theorem}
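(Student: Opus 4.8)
The plan is to exploit Theorem~\ref{symb2}: all the zeros of every $P_n(z)$ are real if and only if $b^{-1}(\mathbb{R})$ contains (an image of) a Jordan curve, where $b(z)=-1/z+\alpha z-\beta z^2+\gamma z^3$. So the entire problem reduces to a planar analysis of the level set $b^{-1}(\mathbb{R})$ in $\mathbb{C}\setminus\{0\}$, and the role of $G(z)$ (whose zeros are the critical points of $b$) is that the local structure of $b^{-1}(\mathbb{R})$ is governed by these critical points: away from critical points $b^{-1}(\mathbb{R})$ is a disjoint union of smooth arcs, while at a critical point of order $k$ the set $b^{-1}(\mathbb{R})$ looks locally like $2(k+1)$ arcs meeting at equal angles. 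First I would record the behaviour of $b$ near $z=0$ and near $z=\infty$: since $b(z)\sim -1/z$ as $z\to 0$ and $b(z)\sim\gamma z^3$ as $z\to\infty$, the preimage of $\mathbb{R}$ consists near $0$ of one smooth arc through the puncture (one branch of $-1/z$ real) and near $\infty$ of three arcs going to infinity (the three rays where $\gamma z^3\in\mathbb{R}$). Then the global picture of $b^{-1}(\mathbb{R})$ is a finite graph embedded in the sphere whose vertices are the critical points together with $0$ and $\infty$, and whose combinatorics is constrained by an Euler-characteristic/valence count.

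The next step is a case analysis driven by the nature of the roots of $G(z)=3\gamma z^4-2\beta z^3+\alpha z^2+1$. Note $G(0)=1\ne 0$, so $0$ is never a critical point, and since the recurrence has real coefficients, $b(\bar z)=\overline{b(z)}$, hence $b^{-1}(\mathbb{R})$ is symmetric under complex conjugation; also the nonreal critical points come in conjugate pairs. In case (a), $G$ is hyperbolic: I would translate ``$G$ hyperbolic'' into the stated inequalities on $(\alpha,\beta,\gamma)$ by the standard quartic-discriminant criterion together with the sign conditions forcing all four real roots (this is the routine ``equivalent to'' computation, where one also uses that $G(0)=1>0$ and the leading coefficient $3\gamma$, together with $b'$ changing sign appropriately, pins down $-\alpha^2/36\le\gamma\le\alpha^2/12$ and $\alpha<0$). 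With four real critical points one shows that the real axis itself, suitably continued around the puncture through the preimages, together with the arcs emanating from the critical points, must contain a closed Jordan curve separating $0$ from $\infty$: the key is that with all critical points real, the vertical (imaginary-direction) local branches at consecutive real critical points link up into a simple closed loop around $0$. In case (b), $G$ has two real roots of opposite sign and one conjugate pair; here I would argue that the presence of the conjugate pair of critical points is exactly what can destroy the Jordan curve, and the extra inequalities $12\alpha\gamma-\beta^2\ge 0$ and $64\gamma^3+(4\alpha\gamma-\beta^2)^2\ge 0$ (equivalently $\gamma<0$, $\triangle(G)<0$, $\triangle(W)\le 0$, $\triangle(H)\ge 0$ with $W,H$ as defined) are precisely the conditions under which the two arcs through the conjugate critical points still close up — possibly through a non-simple curve (a figure-eight-type configuration passing through a critical point) that nonetheless encloses $0$, which by Theorem~\ref{symb2}(ii) still suffices because it contains the image of a Jordan curve around the puncture. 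The reductions to $W(z)=\alpha z^2+\beta z+3\gamma$ (the ``derivative-type'' resolvent whose discriminant sign detects whether the two real critical values straddle correctly) and to the depressed cubic $H(z)=z^3+2^{4/3}\gamma z+(4\alpha\gamma-\beta^2)/(3\sqrt3)$ come from a resolvent-cubic manipulation of $G$ together with a scaling $z\mapsto cz$ chosen to normalize the coefficients.

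For the converse direction in each case, I would show that if the stated inequalities fail then $b^{-1}(\mathbb{R})$ contains no Jordan curve with $0$ in its interior: typically one exhibits that every connected component of $b^{-1}(\mathbb{R})$ is either an arc from $\infty$ to $\infty$ (not enclosing $0$) or passes through $0$ as a single crossing arc, so no component separates $0$ from $\infty$; here the fact that $b^{-1}(\mathbb{R})$ is a finite graph with known vertex valences makes this a finite check once the critical-point configuration is fixed. The main obstacle I anticipate is case (b): verifying that the conjugate pair of critical points produces a closed curve rather than two arcs escaping to infinity is a genuinely global statement about the level set, not a local one, and pinning down the precise threshold — i.e.\ that it is exactly $\triangle(W)\le 0$ and $\triangle(H)\ge 0$ and not some other combination — will require carefully tracking how the real level curve through a complex critical point connects to the rest of the graph, presumably by a continuity/monotonicity argument as the parameters move across the boundary of the region, together with an explicit identification of $\triangle(H)=0$ as the locus where a critical value becomes real (so that a branch point appears on $\mathbb{R}$). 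The bookkeeping translating the geometric conditions ``$G$ hyperbolic'' and ``two real roots of opposite sign plus a conjugate pair, with $12\alpha\gamma-\beta^2\ge0$, $64\gamma^3+(4\alpha\gamma-\beta^2)^2\ge0$'' into the discriminant inequalities is routine but lengthy, and I would relegate it to a lemma.
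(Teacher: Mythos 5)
Your proposal is correct and follows essentially the same route as the paper: reduce via Theorem~\ref{symb2} to the existence of a Jordan curve in $b^{-1}(\mathbb{R})$, analyze the net through the critical points of $b$ (the zeros of $G$) with a case split on the root configuration of $G$, and translate everything into discriminant inequalities. The only difference is one of implementation in case (b), where the paper pins down the threshold by computing the singular points of the real plane curve $\Im(b)=0$, showing they occur exactly on $\{\triangle(H)=0\}\cup\{\triangle(G)=0\}$, and then checking the topology at one sample point in each connected component of the complement (a Petrovski-type lemma) --- which is the same continuity-across-singular-parameters idea you sketch, including your correct identification of $\triangle(H)=0$ as the locus where the conjugate pair of critical points acquires real critical values.
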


In \textsection \ref{sect2} we discuss the  nets associated with real rational functions whose  critical points are all real;
in \textsection \ref{sect3} we discuss the standard discriminant of quartic polynomials and its application to solving Problem 1; in \textsection \ref{sect4} we prove Theorem $2(a)$; in \textsection \ref{sect5} we prove Theorem $2(b)$. Finally in \textsection \ref{sect6} we give some examples and numerical computations.
 
\section{Real rational functions}\label{sect2}
 Let $f_1, f_2 \in \mathbb{R}[z]$ and $f'_1, f'_2 $ be their respective derivatives. The Wronskian of $f_1, f_2$ is the polynomial $W(f_1, f_2):= f'_2f_1 - f_2 f'_1$. If $f_1 \not\equiv 0$, we consider a rational function $g:=\frac{f_2}{f_1}.$ This defines a map $g: \mathbb{C}P^1 \to \mathbb{C}P^1$ whose derivative is $\frac{W(f_1, f_2)}{f_1^2}.$
 
 For a non-constant rational function $g(z) = \frac{f_2(z)}{f_1(z)}$, where $f_1(z)$ and $f_2(z)$ are relatively prime polynomials, the degree of $g(z)$ is defined as the maximum of the degrees of $f_1(z)$ and  $f_2(z)$.
 
 \begin{definition}
 A  point $z_c$ is called a critical point of $g(z)$ if $g(z)$ fails to be injective in a neighbourhood of $z_c$. 
A critical value of $g(z)$ is the image of a critical point. The order of a critical point $z_c$ of $g(z)$ is the order of the zero of $g'(z)$ at $z_c$. 
A pole of  $g(z)$ is said to be a critical point of $g(z)$ of order $k$ if $z_c$ is a critical point of $1/g (z)$ of order $k$. A point $z = \infty$ is said to be a critical point of
$g(z)$ of order $k$ if $w = 0$ is a critical point of $ g(1/w)$ of order $k$.
\end{definition}

 Suppose that $f_1$ and $f_2$ have no common zeros. Then the critical points of $g$ in the  complex plane are the zeros of their Wronskian $W(f_1, f_2)$. Moreover, if  $d$ is the degree of $g(z)$, then by  Riemann–Hurwitz formula, $g$ has $2d-2$ critical points counting multiplicity in $\mathbb{C}P^1$, see \cite{TSS} for more details.

\subsection{Nets associated with real rational functions} \noindent

\medskip

Suppose that $g(z)$ is a real rational function, then $g:\mathbb{C}P^1 \to \mathbb{C}P^1$ has the property that it maps $\mathbb{R}P^1$ to $\mathbb{R}P^1$ where  $\mathbb{R}P^1 \subset \mathbb{C}P^1$ is $\mathbb{R}\cup \{\infty\}$, the extended real line. 
 Denote by  $\Gamma_g$ the set $\Gamma_g:=g^{-1}(\mathbb{R}P^1)$, the pre-image of $\mathbb{R}P^1$. Then, $\Gamma_g$ is called the \textit{net} of the rational function $g(z)$, see \cite{FS, MLN}. Moreover, $\Gamma_g$ partitions  $\mathbb{C}P^1$ into a finite number of regions. 
 
Since $g(z)$ is a real rational function, then $\Gamma_g$ is symmetric with respect to complex conjugation. If we additionally assume that all the critical points of $g(z)$ are real, then  $\Gamma_g$ consists of smooth curves which intersect only at the real critical points of $g(z)$. These intersection points are also called vertices of  $\Gamma_g$.
Observe that  $\mathbb{C}P^1 \setminus \Gamma_g$ is a union of $2d$ disjoint domains/regions, where $d$ is the degree of $g$. The closure of each region is homeomorphic to a disc, and the boundary of each region maps onto  $\mathbb{R}P^1$. This is because the regions (and their closures) are the pre-images of one of the two discs in $\mathbb{C}P^1\setminus \mathbb{R}P^1$ (or their closures) respectively, and there are no critical points in the interior of any  region, see \cite{FS} for more details. 

\section{The standard discriminant of quartic polynomials and its application to the problem }\label{sect3}

We begin this section by describing the properties of the zeros of real quartic polynomials  using the notion of discriminants. This is done in order to determine the possible behaviour of the zeros of $G(z)$ defined in \eqref{Flav1x}. It suffices to study the zeros of a family of  real quartic polynomials of the form 
\begin{eqnarray}\label{Flav22}
E(u)=u^4+ q u^2+r u +s.
\end{eqnarray}

\begin{definition}
Let $P(x)$ be a univariate polynomial of degree  $n$  with zeros $x_1, x_2,\ldots, x_n$ and leading coefficient $a_n$. The discriminant of $P(x)$  is  defined as  
$$\Delta(P)=a_n^{2n-2} \prod_{1\leq i<j\leq n}(x_i-x_j)^2.$$
\end{definition}
A straight forward computation gives the discriminant of $E(u)$  as
\begin{eqnarray*}
\Delta(E)=-4 q^3 r^2 - 27 r^4 + 16 q^4 s + 144 q r^2 s - 128 q^2 s^2 + 256 s^3.
\end{eqnarray*}

The following result holds. 
\begin{theorem} \cite{REL}  \label{lovence}
Let $E$ be as defined in \eqref{Flav22} and $\bigtriangleup(E)$ be its discriminant.
The following criteria  determine the character of zeros of $E$.
\begin{enumerate}[(a)]\item If $\bigtriangleup(E)<0$,  then $E$ has two distinct real zeros and a pair of non-real complex conjugate zeros.
\item If $\bigtriangleup(E)>0$, then all the zeros of $E$ are distinct. All are either real or complex conjugates as follows:
\begin{enumerate}[(i)]
\item $q <0, s>\frac{q^2}{4}$, the zeros are non-real (or complex-conjugates); \\
 $q <0, s<\frac{q^2}{4}$, the zeros are real;
\item $q \geq 0$, the zeros are non-real.
\end{enumerate}
\item If $\bigtriangleup(E)=0$, at least two zeros of $E$ are equal and
\begin{enumerate}[(i)]
\item $q <0, s>\frac{q^2}{4}$, two equal real zeros, two non-real;\\
 $q <0, -\frac{q^2}{12}<s<\frac{q^2}{4}$, all zeros are real, but two and only two are equal;\\
 $q <0, s=\frac{q^2}{4}$, then we have two pairs of equal real zeros;\\
$q <0, s=-\frac{q^2}{12}$, all zeros are real of which  three are equal;
\item $q >0, s>0, r \neq 0$, two equal real zeros, two non-real; \\
 $q >0, s=\frac{q^2}{4}, r=0$, two pairs of equal non-real zeros ;\\
 $q >0, s=0$, two equal real zeros, two non-real;\\
 \item $q =0, s>0,$ two equal real zeros, two non-real; \\
       $q =0, s=0$, four equal real zeros.\\
\end{enumerate}
\end{enumerate}
\end{theorem}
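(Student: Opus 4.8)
The plan is to reduce everything to the factorization of the depressed quartic into two real monic quadratics and then read off the root types from their discriminants. Since $E(u)=u^4+qu^2+ru+s$ has no cubic term, its four roots sum to zero; hence any factorization of $E$ over $\mathbb{R}$ into monic quadratics must have the form
\[
E(u)=(u^2-au+c)(u^2+au+d),\qquad a,c,d\in\mathbb{R},
\]
with $q=c+d-a^2$, $r=a(c-d)$, $s=cd$. Such a real factorization always exists (group the two real roots, or each conjugate pair, together). The first step is the sign law for $\Delta(E)$. Using multiplicativity of the discriminant, $\Delta(E)=\Delta(Q_1)\,\Delta(Q_2)\,\operatorname{Res}(Q_1,Q_2)^2=(a^2-4c)(a^2-4d)\,\operatorname{Res}(Q_1,Q_2)^2$, where $Q_1=u^2-au+c$ and $Q_2=u^2+au+d$. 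Because $\operatorname{Res}^2\ge 0$ and a real quadratic is real-rooted exactly when its discriminant is nonnegative, the sign of $\Delta(E)$ equals the sign of $(a^2-4c)(a^2-4d)$ whenever the two quadratics have no common root. This already yields the coarse trichotomy: $\Delta(E)<0$ forces exactly one quadratic to carry a complex-conjugate pair and the other two real roots, giving part (a); $\Delta(E)>0$ forces both quadratics to be of the same type, i.e. four distinct real roots or two conjugate pairs, which is the dichotomy in (b); and $\Delta(E)=0$ forces either a vanishing quadratic discriminant (a real double root) or a common root of $Q_1,Q_2$, which is the setting of (c).

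To separate the two alternatives in (b), I would normalize $c=\tfrac{a^2}{4}-p$ and $d=\tfrac{a^2}{4}-w$, so that $a^2-4c=4p$ and $a^2-4d=4w$; thus $p\ge 0$ (resp. $w\ge 0$) signals that $Q_1$ (resp. $Q_2$) is real-rooted. A direct computation gives the two key identities
\[
q=-\tfrac{a^2}{2}-(p+w),\qquad \tfrac{q^2}{4}-s=\tfrac{a^2}{2}(p+w)+\tfrac14(p-w)^2 .
\]
If both $p,w>0$ (four real roots) the first identity gives $q<0$ and the second gives $\tfrac{q^2}{4}-s>0$, i.e. $s<\tfrac{q^2}{4}$; this is the real case of (b)(i). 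If both $p,w<0$ (two conjugate pairs), write $p=-P,\ w=-W$ with $P,W>0$: when $a^2/2\le P+W$ the first identity gives $q\ge 0$, which is (b)(ii); when $a^2/2>P+W$ one gets $q<0$, and a one-line estimate on the second identity (the term $\tfrac{a^2}{2}(P+W)>(P+W)^2$ dominates $\tfrac14(P-W)^2$) shows $\tfrac{q^2}{4}-s<0$, i.e. $s>\tfrac{q^2}{4}$, the non-real case of (b)(i). Since under $\Delta(E)>0$ the roots are distinct and one of these two exhaustive cases occurs, this proves (b) and also shows $s=q^2/4$ cannot occur there.

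Part (c) is handled in the same framework but is the main labor. When $\Delta(E)=0$, either $p=0$ or $w=0$ (a real double root at $\pm a/2$), or $\operatorname{Res}(Q_1,Q_2)=0$ (the two quadratics share a root); crucially, the ``two pairs of equal roots'' and ``triple root'' configurations live on the $\operatorname{Res}=0$ branch with $p,w$ of the \emph{same} sign, not on $pw=0$, and this is the point one must be careful about. I would then run a finite case analysis, feeding the degeneracy $pw\,\operatorname{Res}=0$ together with $q=-\tfrac{a^2}{2}-(p+w)$, $\tfrac{q^2}{4}-s=\tfrac{a^2}{2}(p+w)+\tfrac14(p-w)^2$, and $r=a(w-p)$, and matching against the sign of $q$, the position of $s$ relative to $\tfrac{q^2}{4},0,-\tfrac{q^2}{12}$, and whether $r=0$. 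The representative boundary values are pinned down by explicit configurations: $s=\tfrac{q^2}{4}$ with $q<0$ comes from roots $\pm t,\pm t$ (two equal real pairs), $s=-\tfrac{q^2}{12}$ from a triple root $(3m,-m,-m,-m)$, and $r=0$ with $q>0$ forces the biquadratic symmetry giving equal non-real pairs. The only genuine obstacle is organizing this enumeration so that every listed subcase of (c) is reached exactly once and the $\operatorname{Res}=0$ cases are not double-counted with the $pw=0$ cases.
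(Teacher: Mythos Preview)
The paper does not prove this theorem: it is quoted from Rees \cite{REL} and used as a black box, so there is no in-paper argument to compare against. Rees's own 1922 treatment is a graphical one, tracking how the graph of $u^4+qu^2$ meets the line $-ru-s$.

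Your factorization route is a clean algebraic alternative and is essentially correct for parts (a) and (b). The identity $\Delta(E)=\Delta(Q_1)\Delta(Q_2)\operatorname{Res}(Q_1,Q_2)^2$ gives the coarse trichotomy immediately, and your $p,w$ substitution together with the two identities $q=-\tfrac{a^2}{2}-(p+w)$ and $\tfrac{q^2}{4}-s=\tfrac{a^2}{2}(p+w)+\tfrac14(p-w)^2$ sorts the $\Delta(E)>0$ case neatly; the estimate you quote for the non-real subcase of (b)(i) is correct. One caveat: the factorization $E=Q_1Q_2$ is not unique (four real roots admit three pairings), but for (a) and (b) this is harmless because every factorization yields the same sign of $\Delta(Q_1)\Delta(Q_2)$.

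For (c) this non-uniqueness is exactly the obstacle you flag, and your parenthetical claim that the double-pair and triple-root configurations live ``on the $\operatorname{Res}=0$ branch with $p,w$ of the same sign, not on $pw=0$'' is not right as stated. The real double pair $(u^2-t^2)^2$ can be factored either as $(u-t)^2(u+t)^2$, giving $p=w=0$, or as $(u^2-t^2)(u^2-t^2)$, giving $p=w=t^2>0$ and $\operatorname{Res}=0$; and the triple-root polynomial $(u-r)^3(u+3r)$ in its natural factorization $Q_1=(u-r)^2$, $Q_2=(u-r)(u+3r)$ has \emph{both} $p=0$ and $\operatorname{Res}=0$. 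So the enumeration for (c) cannot be organized by ``which branch'' a configuration sits on without first fixing a canonical factorization. A cleaner fix is to abandon the $Q_1Q_2$ picture for (c) and argue directly from the root multiset, using the explicit one-parameter families you already mention (e.g.\ $(\pm t,\pm t)$ pins down $s=\tfrac{q^2}{4}$, and $(3m,-m,-m,-m)$ pins down $s=-\tfrac{q^2}{12}$) to locate the boundary strata, then read off the open strata between them. With that adjustment the plan for (c) is sound, but as written it is a sketch rather than a proof.
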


For concrete application of Theorem \ref{lovence} in this work, we consider 
 $G(z)$ in \eqref{Flav1x} defined by
\begin{eqnarray*}\label{katiki} G(z)=3 \gamma z^4-2\beta z^3+ \alpha z^2 +1, \end{eqnarray*}
from which  we obtain 
\begin{eqnarray}\label{Flav2}
J(u)=u^4+ \alpha u^2-2\beta u +3 \gamma.
\end{eqnarray}
Equation (\ref{Flav2}) has the same form as (\ref{Flav22}) in  Theorem \ref{lovence}. We now  apply this theorem to $J(u)$ subject to  $q=\alpha, r=-2 \beta, s=3\gamma$. For a possible formation of a\textit{ smooth Jordan curve with $0$ in its interior}, it is a necessary condition that $G(z)$ has at least two simple real zeros of opposite signs. Thus it is enough  to consider only 3 possible cases  of zeros of $J(u)$, namely:
\begin{enumerate}[(S1)]\item
The polynomial  $J$ has two distinct real zeros of opposite signs and a pair of complex conjugate zeros. 
\item All the zeros of $J$ are real and distinct. 
 \item The polynomial $J$ has all real zeros in which two and only two are equal. 
\end{enumerate}
 
\subsection{The connected components of real quartic polynomials} \noindent 

\medskip
Consider the family of real quartic  polynomials of the form 
\begin{eqnarray}\label{swallowtail}
\mathcal{F}= t^4+c_1t^2+c_2t+c_3.
\end{eqnarray}
Then $\mathcal{F}$ can be identified with the space of coefficients $(c_1, c_2, c_3)$ also called the parameter space.  We define the \textit{standard discriminant surface} denoted by $\mathscr{D}(\mathcal{F})$ as the subset of the parameter space for which the associated polynomial has multiple real zeros.

From \eqref{swallowtail}, we obtain
\begin{eqnarray*}\label{Bbba}
\bigtriangleup (\mathcal{F})=-4 c_1^3 c_2^2 - 27 c_2^4 + 16 c_1^4 c_3 + 144 c_1 c_2^2 c_3 - 
 128 c_1^2 c_3^2 + 256 c_3^3.
\end{eqnarray*}
and
\begin{eqnarray*}
{\small\mathscr{D}(\mathcal{F})=\Bigl\{ (c_2,c_3,c_4):-4 c_1^3 c_2^2 - 27 c_2^4 + 16 c_1^4 c_3 + 144 c_1 c_2^2 c_3 - 128 c_1^2 c_3^2 + 256 c_3^3= 0 \Bigl\}} .
\end{eqnarray*}
The standard discriminant surface $\mathscr{D}(\mathcal{F})$ for \eqref{swallowtail}
is also called a swallowtail, see Fig.1.
\begin{figure}[!htb] \label{Nabs}
  \centering 
   \includegraphics[height=7cm, width=12.5cm ]{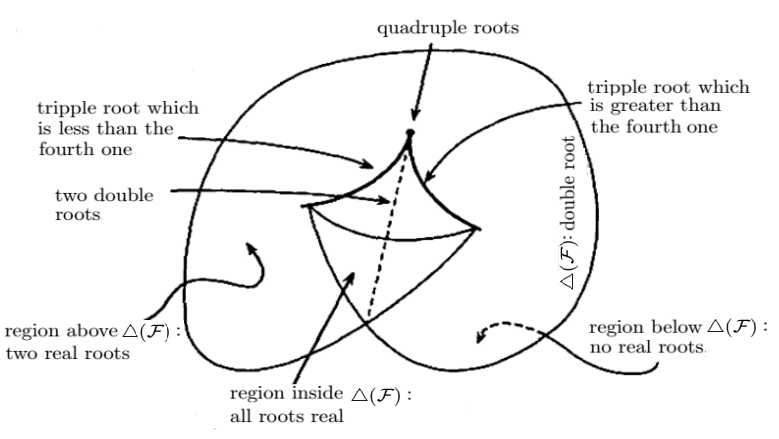}
    \caption{The discriminant surface $\mathscr{D}(\mathcal{F})$ of \eqref{swallowtail}}
    Source: Discriminants, resultants, and multidimensional determinants \cite{Ge}, pg 381. \end{figure}
  It is a surface which self-intersects along a curve corresponding to polynomials having two double real zeros, the cuspidal cubic  edges correspond to the polynomials with one triple zero while the most singular point is the quartic polynomial $x^4$ with a zero of multiplicity $4$. For the spaces of polynomials of higher degrees, a similar strata will also appear \cite{VVA}.
For a given  $c=(c_1, c_2, c_3)\in \mathbb{R}^3$, one can determine in which region of the complement of $\mathscr{D}(\mathcal{F})$ the point $c$ lies, by checking if the set of real zeros of the polynomial has $0, 2$ or $4$ elements:- these being the only possibilities for a real quartic polynomial, as non-real zeros appear in pairs as complex conjugates.

Our main interest is the space of all polynomials in $\mathcal{F}$ with a non-vanishing discriminant. This amounts to studying the connected components which form the complement of the discriminant surface in the parameter space of $\mathcal{F}$. In our case, such space will consist of polynomials without multiple real zeros.

The complement of the discriminant surface in the parameter space consists of several connected components. It is a finite disjoint union of connected open sets, also known as \textbf{cells}, such that the number of real zeros of polynomials in $\mathcal{F}$ does not change when the parameters vary within the same cell. In other words, the polynomials lying in the same cell have diffeotopic real zero sets \cite{LE}.
The discriminant variety forms the boundary between these cells, such that the number of real zeros of polynomials change only when crossing a boundary \cite{BBs,YBF}. 

Let us consider a familiar example of a  family of real quadratic polynomials $f(x)=ax^2+bx+c$. The corresponding discriminant is $\bigtriangleup(f) = b^2 −4ac$. The sign of  $\bigtriangleup(f) $ tells us how many real roots a quadratic polynomial has. In other words, it tells us about the topology of the real zero set. The same idea holds true for polynomials with degree $> 2$ and in particular \eqref{swallowtail}.
\medskip

In the final part of this section, we state the following lemmata which will be used in the proof of the main results. For their proofs, refer to \cite{BSF}.
\begin{lemma} \cite{BSF}\label{lem}  Suppose $f(z)$ is an analytic function in a neighborhood of a bounded non-empty region $\Phi \subset \mathbb{C}$ such that $\Im (f(z)) = 0$ for all $z \in \partial\Phi$. Then $f(z)$ is constant on $\Phi$.
\end{lemma}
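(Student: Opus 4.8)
The statement to prove is Lemma~\ref{lem}: if $f$ is analytic in a neighborhood of a bounded non-empty region $\Phi\subset\mathbb C$ and $\Im f\equiv 0$ on $\partial\Phi$, then $f$ is constant on $\Phi$.

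\medskip

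The plan is to argue by the open mapping theorem combined with the boundary hypothesis. First I would reduce to the case where $\Phi$ is connected: a region is by convention open and connected, so this is free, but if one allows $\Phi$ to be a disjoint union of components one simply argues on each component and uses that $\partial\Phi$ restricted to a component is the boundary of that component. Assume then $\Phi$ is a bounded domain and $f$ is holomorphic on an open neighborhood $U\supset\overline{\Phi}$. Suppose for contradiction that $f$ is non-constant on $\Phi$; by the identity theorem $f$ is non-constant on the component of $U$ containing $\Phi$, hence $f$ is an open map there. Therefore $f(\Phi)$ is a non-empty open subset of $\mathbb C$, so it contains a point $w_0$ with $\Im w_0\neq 0$; replacing $f$ by $-f$ if necessary, assume $\Im w_0>0$.

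\medskip

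The key step is then a maximum-principle argument applied to the harmonic function $u:=\Im f$. The function $u$ is harmonic on $U$, continuous on the compact set $\overline{\Phi}=\Phi\cup\partial\Phi$, and vanishes identically on $\partial\Phi$ by hypothesis. By the maximum principle for harmonic functions, $\max_{\overline\Phi} u$ is attained on $\partial\Phi$, hence equals $0$; similarly $\min_{\overline\Phi}u=0$. Thus $u\equiv 0$ on $\overline\Phi$, contradicting $w_0=f(z_0)\in f(\Phi)$ with $\Im w_0>0$. Hence $f$ is constant on $\Phi$. I should be a little careful about one point: the maximum principle as just used requires $u$ continuous up to $\partial\Phi$ and $\partial\Phi$ to actually be the topological boundary of the open set $\Phi$ inside $\mathbb C$; since $\Phi$ is bounded and $f$ (hence $u$) is defined and continuous on a neighborhood of $\overline\Phi$, both are automatic. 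An alternative, essentially equivalent, route avoids the open mapping theorem: directly conclude $\Im f\equiv 0$ on $\Phi$ from the maximum/minimum principle, then note that a holomorphic function with identically zero imaginary part on a domain has zero partial derivatives by Cauchy--Riemann, hence is constant.

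\medskip

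The only mild obstacle is bookkeeping about connectedness and about whether $\partial\Phi$ denotes the full topological boundary; neither is a real difficulty given the standing assumption that $\Phi$ is a (bounded, open, connected) region and that $f$ is analytic on a neighborhood of it. No deep input beyond the maximum principle for harmonic functions (equivalently, the open mapping theorem) is needed, so I would present the harmonic-function version as the cleanest: harmonicity of $\Im f$, the two-sided maximum principle forcing $\Im f\equiv 0$ on $\overline\Phi$, and then Cauchy--Riemann to upgrade ``$\Im f$ constant'' to ``$f$ constant.''
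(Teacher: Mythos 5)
Your argument is correct. Note that the paper itself gives no proof of this lemma --- it is quoted from the reference [BSF] with the remark ``for their proofs, refer to [BSF]'' --- so there is nothing internal to compare against; but your route (harmonicity of $\Im f$, the two-sided maximum principle on the compact set $\overline{\Phi}$ forcing $\Im f\equiv 0$, then Cauchy--Riemann or the open mapping theorem to conclude $f$ is constant) is the standard proof of this statement and is complete as written, including the componentwise reduction and the observation that the boundary of each component of $\Phi$ sits inside $\partial\Phi$.
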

\begin{lemma}  \cite{BSF}\label{Agn}
 Let $f(z)$ be an analytic function in a neighborhood of a disk  $\textit{D} \subset \mathbb{C}$. Define
$\Gamma:= \{z \in \textit{D} : \Im (f(z)) = 0\}$ and suppose that $\Gamma \neq \emptyset$ and $\textit{D} \setminus \Gamma$ is connected. Then $f(z)$ is constant on $\textit{D}$.
\end{lemma}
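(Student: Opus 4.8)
The plan is to decompose the disk according to the sign of $\Im f$ and to show that connectedness of $D\setminus\Gamma$ is incompatible with $f$ being non-constant whenever $\Gamma\neq\emptyset$. Write $f=u+iv$ with $u=\Re f$ and $v=\Im f$; since $f$ is analytic, $v$ is real-valued and harmonic on a neighbourhood of $D$, and by definition $\Gamma=\{z\in D:v(z)=0\}$. The key structural observation is that the complement splits as
$$D\setminus\Gamma=\{z\in D:v(z)>0\}\ \cup\ \{z\in D:v(z)<0\},$$
a union of two disjoint sets, each open in $D$ by continuity of $v$.

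First I would exploit the connectedness hypothesis: since $D\setminus\Gamma$ is connected and is written as a disjoint union of two relatively open sets, one of them must be empty. Hence $v$ does not change sign on $D$; replacing $f$ by $-f$ if necessary (which only reflects $v$ and leaves $\Gamma$ unchanged), I may assume $v\geq 0$ throughout $D$.

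Next I would use $\Gamma\neq\emptyset$ to locate an interior zero $z_0$ of $v$ and derive constancy. The cleanest route is the open mapping theorem: suppose, for contradiction, that $f$ is non-constant. Then $f$ is an open map, so it sends any open neighbourhood of $z_0$ onto an open neighbourhood $U$ of the real value $f(z_0)\in\mathbb{R}$. Since $U$ is open in $\mathbb{C}$ and meets $\mathbb{R}$, it contains points with both strictly positive and strictly negative imaginary parts; pulling these back shows that every neighbourhood of $z_0$ meets both $\{v>0\}$ and $\{v<0\}$. In particular both sets are non-empty, so $D\setminus\Gamma$ is a disjoint union of two non-empty relatively open sets, contradicting its connectedness. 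Therefore $f$ must be constant on $D$. Equivalently, one may argue via the minimum principle for harmonic functions: with $v\geq 0$ and $v(z_0)=0$ at an interior point, $v$ attains an interior minimum, forcing $v\equiv 0$, whence $f$ is real-valued and analytic on the open disk and hence constant by the open mapping theorem.

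The main obstacle I anticipate is not the analytic machinery but the bookkeeping about where the zero of $v$ lives: the extremum/open-mapping step requires the point $z_0\in\Gamma$ to be an interior point of $D$, which is automatic once $D$ is taken to be an open disk (the natural setting, since $f$ is assumed analytic on a neighbourhood of $D$). One should therefore run the argument on the interior $D^\circ$ and confirm that the decomposition and the connectedness hypothesis descend to it; everything else---openness of $\{v>0\}$ and $\{v<0\}$, the disjointness of the splitting, and the passage from $v\equiv 0$ to $f$ constant---is routine.
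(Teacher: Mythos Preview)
The paper does not supply its own proof of this lemma; it is quoted from \cite{BSF} with the remark ``For their proofs, refer to \cite{BSF}.'' So there is nothing in the paper to compare against, and I can only assess your argument on its merits.

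Your proof is correct and is the natural one: decompose $D\setminus\Gamma$ into the two open sets $\{v>0\}$ and $\{v<0\}$, use connectedness to kill one of them, and then invoke either the open mapping theorem or the minimum principle for the harmonic function $v$ at a point of $\Gamma$ to force $v\equiv 0$, hence $f$ constant. The caveat you flag is genuine and worth stating more sharply: the lemma is only true if $D$ is read as an \emph{open} disk. For a closed disk one can manufacture counterexamples where $\Gamma$ is a single boundary point---e.g.\ $f(z)=iz$ on $D=\{\,|z-1|\le 1\,\}$ has $\Gamma=\{0\}$ and $D\setminus\{0\}$ connected, yet $f$ is non-constant. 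Under the open-disk reading your open-mapping step applies directly at any $z_0\in\Gamma$, and no further bookkeeping is needed.
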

From now onwards, we shall use  $b(z)=- \frac{1}{z}+ \alpha z  - \beta z^2+ \gamma z^3$, the real rational function defined in  \eqref{shamim22}, and 
$\Gamma_b:=b^{-1}(\mathbb{R}P^1)= b^{-1}(\mathbb{R}) \cup \{0, \infty\}. $ 
It is clear that  $b^{-1}(\mathbb{R}) \subset  \mathbb{C}\setminus \{0 \}$.

\section{The Laurent polynomial with all real critical points}\label{sect4}
In this section, we prove the first part of Theorem 2 which is a situation where  all the critical points of $b(z)$ are real i.e., $G(z)$ defined in \eqref{Flav1x} is hyperbolic. We first discuss cases $(S2)$ and $(S3)$ corresponding to the reality of zeros of $G(z)$ for which  $b^{-1}(\mathbb{R})$ contains a smooth Jordan curve with $0$ in its interior. Later, we handle the case where $G(z)$ is hyperbolic but  $b^{-1}(\mathbb{R})$  contains a non-simple curve with $0$ in its interior. 

\begin{lemma} \label{only}
In the above notation, consider the recurrence \eqref{christ22} subject to \eqref{almst}. Then, $P_2(z)$ is hyperbolic if and only if $\alpha \leq 0$.
\end{lemma}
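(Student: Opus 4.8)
The plan is to compute $P_2(z)$ explicitly from the recurrence and initial conditions, and then simply read off when its discriminant is nonnegative. First I would unwind the recurrence: from \eqref{christ22} with $n=1$ we get $P_1(z)=zP_0(z)+\alpha P_{-1}(z)+\beta P_{-2}(z)+\gamma P_{-3}(z)=z$, using \eqref{almst}. Then with $n=2$, $P_2(z)=zP_1(z)+\alpha P_0(z)+\beta P_{-1}(z)+\gamma P_{-2}(z)=z^2+\alpha$. So $P_2(z)=z^2+\alpha$.

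The second step is the trivial observation that a monic real quadratic $z^2+\alpha$ has both roots real if and only if its discriminant $-4\alpha$ is nonnegative, i.e. $\alpha\le 0$ (the two roots being $\pm\sqrt{-\alpha}$, which coincide at $0$ when $\alpha=0$ and are genuinely real and distinct when $\alpha<0$). This gives both directions of the equivalence at once.

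There is essentially no obstacle here; the only thing to be careful about is bookkeeping with the index shifts in the initial conditions, making sure that $P_{-1},P_{-2},P_{-3}$ are all used correctly as $0$ and $P_0=1$, so that the $\beta$ and $\gamma$ terms genuinely drop out of $P_1$ and $P_2$. Once $P_2(z)=z^2+\alpha$ is established, the conclusion is immediate, and this lemma then serves as a convenient necessary condition ($\alpha\le 0$) that will be invoked when proving the full characterization in Theorem \ref{Main}.
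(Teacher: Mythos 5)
Your proposal is correct and follows exactly the paper's argument: unwind the recurrence with the standard initial conditions to get $P_2(z)=z^2+\alpha$, then observe this is real-rooted precisely when $\alpha\le 0$. The extra detail you include (computing $P_1(z)=z$ and checking the discriminant explicitly) is just a more careful write-up of the same one-line computation.
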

\begin{proof}
For $n=2$, substituting the initial conditions  in \eqref{christ22} gives $P_2(z)=z^2+ \alpha,$ 
which is clearly  real-rooted if and only if $\alpha \leq 0$.
\end{proof}

\begin{lemma}\label{HHABB}
Let  $Q(z)=az^4+bz^3+cz^2+1$ be a real quartic hyperbolic  polynomial. Then at least two of the zeros of $Q(z)$ have different signs.
\end{lemma}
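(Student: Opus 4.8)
The plan is to read off the relevant elementary symmetric functions of the zeros via Vieta's formulas and exploit the fact that the coefficient of $z$ in $Q(z)$ is $0$. Since $Q$ is hyperbolic, its four zeros $z_1,z_2,z_3,z_4$ (counted with multiplicity) are all real. The constant term of $Q$ is $1\neq 0$, so none of the $z_i$ vanishes, and $a\neq 0$ so $Q$ really has degree $4$. Comparing $Q(z)=a\prod_{i=1}^{4}(z-z_i)$ with $az^4+bz^3+cz^2+0\cdot z+1$, the vanishing of the linear coefficient yields the key identity
\begin{equation*}
e_3:=z_1z_2z_3+z_1z_2z_4+z_1z_3z_4+z_2z_3z_4=0,
\end{equation*}
while the constant term gives $z_1z_2z_3z_4=1/a\neq 0$.

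Next I would argue by contradiction. Suppose all four zeros have the same sign. If all are positive, then each of the four triple products $z_iz_jz_k$ in $e_3$ is strictly positive, so $e_3>0$, contradicting $e_3=0$; if all are negative, each triple product is a product of three negative numbers, hence strictly negative, so $e_3<0$, again a contradiction. Hence the zeros cannot all share a common sign, and since none of them equals $0$, at least one is positive and at least one is negative, which is exactly the claim.

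I do not anticipate any genuine obstacle: this is an elementary sign count through Vieta's formulas, and repeated roots cause no difficulty because the strict inequalities $e_3>0$ (resp. $e_3<0$) hold term by term irrespective of multiplicities. The only points worth stating explicitly are that the hyperbolicity hypothesis is precisely what lets us treat the $z_i$ as real numbers with well-defined signs, and that the normalization of $Q$ — constant term $1$ and absent linear term — is what forces $e_3=0$ together with $e_4\neq 0$.
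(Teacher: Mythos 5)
Your proof is correct and rests on the same idea as the paper's: the vanishing linear coefficient of $Q$ forces a signed elementary symmetric function of the roots to vanish, which is impossible if all four (nonzero, real) roots share a sign. The paper merely packages this by passing to the reciprocal polynomial $M(\tau)=\tau^4+c\tau^2+b\tau+a$ with roots $\tau_i=1/z_i$ and using $\sum_i\tau_i=0$, which is equivalent to your identity $e_3=0$ because $e_3=e_4\sum_i 1/z_i$ and $e_4=1/a\neq 0$.
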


\begin{proof}
Suppose that $z_i$ for $1\leq i\leq 4$ are the real zeros of $Q(z).$ Then none of the $z_i$ is $0$ since $Q(0)\neq 0$. Now, define a new polynomial $M(\tau)=\tau^4+c\tau^2+b \tau + a$ whose zeros are $\tau_i$ for $1\leq i\leq 4$. Since $\sum_{i=1}^{4}\tau_i=0$, then $\tau_i\tau_j <0$ for some $i, j \in \{1,2,3,4 \}$ where $i \neq j$. The result follows from the fact that for each  $i$, the zeros of $M$ are related to the zeros of $Q$ by $\tau_i=\frac{1}{z_i}$. 
\end{proof}

\begin{lemma}\label{SHAM}
Let $b(z)$ and $G(z)$ be defined as in  \eqref{shamim22} and \eqref{Flav1x} respectively. If $G(z)$ has all distinct real zeros, then $b^{-1}(\mathbb{R})$ contains  a Jordan curve with $0$ in its interior. 
\end{lemma}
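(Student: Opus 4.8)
The plan is to study the net $\Gamma_b:=b^{-1}(\mathbb{R}P^1)$ of the real rational map $b(z)=-\tfrac1z+\alpha z-\beta z^2+\gamma z^3\colon\mathbb{C}P^1\to\mathbb{C}P^1$, which has degree $4$. Its critical points are the four zeros of $G(z)=3\gamma z^4-2\beta z^3+\alpha z^2+1$ — by hypothesis real, distinct, and (since $G(0)=1$) nonzero, hence \emph{simple} — together with $z=\infty$, a critical point of order $2$; the point $z=0$ is a simple pole and is not critical. In particular every critical point lies on $\mathbb{R}P^1$, so the description in \textsection\ref{sect2} applies: $\Gamma_b$ is a union of smooth analytic arcs meeting only at these critical points, it contains $\mathbb{R}P^1$, and — because at a real non‑critical point $b$ has a nonzero real derivative, so $\Gamma_b$ is locally just $\mathbb{R}$ there — every edge of $\Gamma_b$ is either contained in $\mathbb{R}P^1$ or has its interior inside one of the two open half‑planes $\mathbb{H}^+,\mathbb{H}^-$, with both endpoints at critical points. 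Conjugation symmetry pairs each arc in $\mathbb{H}^+$ with its mirror image in $\mathbb{H}^-$. Counting the local rays of $\Gamma_b$: at each simple critical point exactly one arc runs into $\mathbb{H}^+$ (two of the four local rays lie along $\mathbb{R}P^1$, the remaining two being conjugate), while at $\infty$ (order $2$, six local rays) exactly two do.

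The crucial point is the claim: \emph{no edge of $\Gamma_b$ contained in $\overline{\mathbb{H}^+}$ joins two zeros of $G$ of the same sign.} Suppose $\ell$ is such an edge, joining zeros $c<c'$ of $G$ in $(0,\infty)$ (the two‑negative case is identical). Along $\ell$ the function $b$ is real (it avoids the poles $\{0,\infty\}$) and strictly monotone, because $b'=G/z^2$ has no zeros off $\mathbb{R}$. Since $b''(c)\neq0$ and $(z-c)^2$ is real and negative along the off‑axis branch of $\Gamma_b$ at $c$, the value $b$ leaves $b(c)$ along $\ell$ with sign $-\operatorname{sgn}b''(c)$; matching this with the monotonicity of $b|_\ell$ forces one of $c,c'$ to be a local maximum of $b|_{(0,\infty)}$ and the other a local minimum, with the maximum value strictly below the minimum value. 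But $G(0)=1>0$ and $G$ has simple zeros, so the signs of $G$ — hence of $b'$ — alternate on $(0,\infty)$. If $c,c'$ are consecutive zeros then $b$ is monotone between them, making ``max value $<$ min value'' impossible; if they are not consecutive then $G$ has three zeros in $(0,\infty)$ and, by Lemma \ref{HHABB}, exactly one in $(-\infty,0)$, so $c,c'$ are the outer two of the three positive zeros and are then \emph{both} local maxima of $b|_{(0,\infty)}$ — again a contradiction. This proves the claim.

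Now assemble the pieces. Each of the four zeros of $G$ meets exactly one arc of $\Gamma_b$ in $\overline{\mathbb{H}^+}$, while $\infty$ meets at most two; hence at least one such arc $\ell$ joins two zeros of $G$, and by the claim its endpoints have opposite signs, say $c>0>c'$. Let $\bar\ell\subset\overline{\mathbb{H}^-}$ be its conjugate, also running from $c$ to $c'$; then $J:=\ell\cup\bar\ell$ is a Jordan curve, it lies in $b^{-1}(\mathbb{R})$ because it avoids $b^{-1}(\infty)=\{0,\infty\}$, and $J\cap\mathbb{R}P^1=\{c,c'\}$. To see that $0$ lies in the interior of $J$, write $\mathbb{R}P^1\setminus\{c,c'\}=A_0\sqcup A_\infty$ with $A_0=(c',c)\ni0$ and $A_\infty\ni\infty$; the arc $\ell$ cuts $\mathbb{H}^+$ into a piece bounded by $\ell\cup\overline{A_0}$ and one bounded by $\ell\cup\overline{A_\infty}$, and likewise $\bar\ell$ cuts $\mathbb{H}^-$, so gluing the two $A_0$‑pieces along $A_0$ and the two $A_\infty$‑pieces along $A_\infty$ exhibits the two components of $\mathbb{C}P^1\setminus J$: one contains $0$, the other contains $\infty$. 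Hence $J$ separates $0$ from $\infty$, i.e.\ $b^{-1}(\mathbb{R})$ contains a Jordan curve with $0$ in its interior.

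The main obstacle is the same‑sign claim of the second paragraph: this is the step that genuinely uses ``all critical points real'' (through $b'$ having no off‑axis zeros), and it must interlock three ingredients — strict monotonicity of $b$ along off‑axis arcs, the local sign of $b-b(c)$ on the off‑axis branch at a simple critical point, and the global alternation of the local extrema of $b$ on each real half‑line (forced by $G(0)>0$ and Lemma \ref{HHABB}). A careless treatment would overlook either the non‑consecutive configuration or the bookkeeping of how many $\overline{\mathbb{H}^+}$‑arcs can be absorbed at $\infty$. Everything else is either the general net theory of \textsection\ref{sect2} or a standard Jordan‑curve argument.
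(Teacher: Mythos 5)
Your proof is correct and follows the same overall architecture as the paper's: both analyze the net $\Gamma_b$, count the local branches of $b^{-1}(\mathbb{R}P^1)$ at the four simple real critical points (one non-real arc each) and at $\infty$ (two), and reduce the lemma to showing that no non-real arc can join two critical points of the same sign, so that some arc must join a positive zero of $G$ to a negative one and close up with its conjugate reflection into a Jordan curve separating $0$ from $\infty$. The one genuine difference is how the same-sign exclusion is established. The paper applies Lemma \ref{lem} to the bounded region enclosed by such a hypothetical arc together with the real segment joining its endpoints: since that region avoids $0$, the function $b$ is analytic there and real on the boundary, hence constant --- a two-line contradiction. You instead prove the exclusion by combining the sign of $b-b(c)$ on the off-axis branch at a simple critical point, the strict monotonicity of $b$ along a non-real arc (using that $b'$ has no zeros off $\mathbb{R}$), and the alternation of the local extrema of $b$ on each real half-line forced by $G(0)=1>0$ together with Lemma \ref{HHABB}; this is longer but equally valid, and it avoids invoking the maximum-principle-type Lemma \ref{lem} for this step. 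Your write-up is also more explicit than the paper's on two points it leaves implicit: the arc-end count ($4+2=6$ ends, hence three upper-half-plane arcs of which at most two can reach $\infty$) guaranteeing that some arc joins two finite critical points, and the separation argument showing that the resulting Jordan curve has $0$ (and not $\infty$) in its interior. The paper's version additionally pins down \emph{which} pair of zeros is joined (the two of opposite signs closest to the origin), but, as your argument shows, that refinement is not needed for the statement of the lemma.
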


\begin{proof} Suppose $G(z)$ has all real and distinct zeros $z_i$ for $1\leq i\leq 4$. As seen above, none of the zeros of $G(z)$ is zero. 
Let $z_3$ and $z_4$ be the zeros of $G(z)$ closest to the origin and having different signs. Since all the zeros of $G(z)$ are distinct, then for each $i$, $z_i$ is a critical point of $b(z)$ of order $1$. Therefore locally in the neighbourhood of each $z_i$, we have  $b(z)\sim z^2.$
Let $w_i$ be the real critical value with $z_i$ as the corresponding critical point. For a sufficiently small $\epsilon >0$, let  $V_i=(w_i -\epsilon, w_i +\epsilon) \subset \mathbb{R}$.  Then  $U_i= b^{-1}(V_i)$, the pre-image of $V_i$ consists of two distinct curves (arcs) intersecting only at $z_i$. Of these, one is a line segment on the real line while the other is a non-real curve. 

Concerning the point at infinity, observe that since $z=\infty$ is a critical point of $b(z)$ of order $2$, we have that $b(z) \sim \gamma z^3$ as $z \to \infty$, and so, locally three curves in $\Gamma_b$ pass through $\infty$, one of which is the real axis and the remaining two are non-real curves. By Lemma \ref{Agn}, any curve must start and terminate only at critical points. The conclusion is that the two non-real curves through infinity pass through  $z_1$ and $z_2$ respectively and the non-real curve through $z_3$ necessarily closes through $z_4$ into a Jordan curve enclosing $0$. By Lemma \ref{lem}, exactly one  Jordan curve can be formed in  $ b^{-1}(\mathbb{R})$.  

Furthermore, no curve connects two distinct  critical points $z_i$ and $z_j$ of the same sign. This is because if such a curve $\Psi$ existed, then $b(z)$  would be analytic in the region $D$ whose boundary is $\Psi \cup I$ where $I \subset \mathbb{R}$ is the segment on the real axis that joins  $z_i$ and $z_j$. Since $\Im(b(z))=0$  for all points $z$ on the said boundary, Lemma \ref{lem} shows that $b(z)$ is constant on $D$, a contradiction. Hence the Jordan curve  enclosing $0$ is uniquely formed in $b^{-1}(\mathbb{R})$.  
\end{proof}

\begin{lemma}\label{SHAMMALE}
In the notation of Lemma \ref{SHAM}, let $G(z)$ be hyperbolic with one double real root and two simple roots.  Then $b^{-1}(\mathbb{R})$ contains  a Jordan curve with $0$ in its interior if and only if  the multiple root is larger in absolute value than the simple root of the same sign.
\end{lemma}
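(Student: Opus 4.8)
The plan is to mimic the topological analysis in the proof of Lemma~\ref{SHAM}, paying careful attention to how the merging of two simple critical points into a double critical point changes the local picture of $\Gamma_b$. Write the zeros of $G(z)$ as $z_1, z_2, z_3$, where $z_1$ is the double root and $z_2, z_3$ are the simple roots; by Lemma~\ref{HHABB} (applied after observing that $G$ is hyperbolic with the leading/constant structure $3\gamma z^4 - 2\beta z^3 + \alpha z^2 + 1$), not all three have the same sign. At $z=\infty$ the picture is unchanged from Lemma~\ref{SHAM}: since $\infty$ is a critical point of order $2$, locally three branches of $\Gamma_b$ emanate from $\infty$, one being $\mathbb{R}$ and the other two being non-real arcs. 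At each simple critical point $z_2, z_3$ we have $b(z) \sim \text{const} + (z-z_i)^2$ locally, so $b^{-1}(\mathbb{R})$ near $z_i$ is two arcs crossing transversally — one along $\mathbb{R}$, one non-real — exactly as before. The new feature is the double critical point $z_1$, where $b(z) \sim \text{const} + (z-z_1)^3$, so locally \emph{three} arcs of $b^{-1}(\mathbb{R})$ pass through $z_1$: the real axis, plus two non-real arcs leaving $z_1$.

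Next I would count and match up arc-endpoints using Lemma~\ref{lem} and Lemma~\ref{Agn}, which force every non-real arc of $\Gamma_b$ to begin and end at critical points, and which forbid a non-real arc joining two distinct critical points of the same sign (such an arc, together with the real segment between them, would bound a region on which $b$ is forced constant). So count the non-real arc-ends: $2$ at $\infty$, $1$ each at $z_2$ and $z_3$, and $2$ at $z_1$ — a total of $6$ ends, i.e. $3$ non-real arcs. The forbidden-same-sign rule together with this bookkeeping is what produces the dichotomy. Suppose $z_1$ (the double root) is the one of a given sign; then $z_1$ has exactly one same-sign partner among $\{z_2,z_3\}$ — call the other, opposite-sign, simple root $z_{\text{opp}}$ — while the remaining simple root $z_{\text{same}}$ shares $z_1$'s sign. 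If $|z_1| > |z_{\text{same}}|$, then $z_1$ and $z_{\text{same}}$ are the two critical points closest to $0$ among those... more carefully: among $z_1, z_2, z_3$ the two nearest the origin will be of opposite signs precisely when $|z_1| > |z_{\text{same}}|$, and in that case one non-real arc out of $z_1$ can close onto $z_{\text{opp}}$, forming a Jordan curve around $0$; the remaining non-real ends ($1$ at $z_1$, $1$ at $z_{\text{same}}$, $2$ at $\infty$) pair up as two arcs running from $\infty$ to $z_1$ and from $\infty$ to $z_{\text{same}}$, without enclosing $0$. Conversely, if $|z_1| < |z_{\text{same}}|$, then the same-sign root $z_{\text{same}}$ is the one separating $z_1$ from $z_{\text{opp}}$ on the real line; now $z_1$'s two non-real arcs cannot both reach $z_{\text{opp}}$ (that would violate Lemma~\ref{lem} by the same region argument, since $z_1$ and $z_{\text{opp}}$ have opposite sign but $z_{\text{same}}$ lies between and would be enclosed) — in fact both non-real arcs from $z_1$ are forced to terminate at $\infty$ or at $z_{\text{same}}$, yielding only a non-closed configuration, and one checks no Jordan curve around $0$ can arise; the same-sign restriction, applied to the triple on the real line, shows $0$ cannot be separated from $\infty$ by $\Gamma_b$.

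To make the "which critical points are nearest $0$" step rigorous I would argue directly: a Jordan curve in $b^{-1}(\mathbb{R})$ enclosing $0$ must, by Lemma~\ref{lem} (constancy on the enclosed region would be absurd since $b$ has a pole at $0$), consist of non-real arcs joined at critical points, and it must separate $0$ from $\infty$; walking along $\mathbb{R}$ from $0$ to $\infty$ the curve is crossed at critical points, forcing the real trace of the Jordan curve to be a segment $[z_a, z_b]$ with $z_a < 0 < z_b$ and with no critical point strictly between $z_a$ and $0$ or between $0$ and $z_b$ other than $z_a, z_b$ themselves — i.e. $\{z_a, z_b\}$ are exactly the two critical points flanking $0$. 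Hence a Jordan curve around $0$ exists in $b^{-1}(\mathbb{R})$ iff the two critical points of $G$ flanking the origin can serve as the two endpoints of a closed non-real arc, and a short local-degree check at a double point (there are enough non-real arc-ends there) versus a simple point shows this is possible exactly when the root of $G$ flanking $0$ on the side of $z_1$ is $z_1$ itself, i.e. exactly when $|z_1|$ exceeds the absolute value of the simple root of the same sign. I expect the main obstacle to be the clean handling of this flanking/closure argument at the double critical point — precisely, ruling out in the "$<$" case every conceivable way the three non-real arcs could reconnect to still bound a $0$-enclosing region — and this is where Lemma~\ref{lem} applied to the region between a same-sign pair of critical points does the essential work.
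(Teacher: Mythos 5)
Your overall strategy --- local models of $\Gamma_b$ at the critical points, Lemmas \ref{lem} and \ref{Agn} to force non-real arcs to run between critical points, and the ``no arc between two same-sign critical points'' exclusion --- is exactly the paper's. But the execution contains a genuine error: you close the Jordan curve at the wrong vertex, and your final criterion is internally inconsistent. In the case $|z_1|>|z_{\mathrm{same}}|$ (multiple root farther from the origin) you assert that ``one non-real arc out of $z_1$ [the double root] can close onto $z_{\mathrm{opp}}$, forming a Jordan curve around $0$,'' with further arcs joining $\infty$ to $z_1$ and to $z_{\mathrm{same}}$. This configuration is impossible, and it is precisely the configuration the paper's proof rules out: since $z_{\mathrm{same}}$ lies strictly between $0$ and $z_1$ on the real axis, a closed curve through $z_1$ and $z_{\mathrm{opp}}$ enclosing $0$ also encloses $z_{\mathrm{same}}$, so your arc from $z_{\mathrm{same}}$ to $\infty$ would have to cross that closed curve at a non-real point; distinct branches of $b^{-1}(\mathbb{R})$ can only meet at a critical point, so this would create a non-real critical point of $b$, contradicting the hyperbolicity of $G$. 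The correct picture is the opposite: the Jordan curve joins the two \emph{simple} roots flanking the origin, namely $z_{\mathrm{same}}$ and $z_{\mathrm{opp}}$, while \emph{both} non-real branches through the double root escape to $\infty$. Your closing sentence even contradicts itself: ``the root of $G$ flanking $0$ on the side of $z_1$ is $z_1$ itself'' means $|z_1|<|z_{\mathrm{same}}|$, yet you equate it with ``$|z_1|$ exceeds the absolute value of the simple root of the same sign.'' By your own (correct) flanking principle, the endpoints of a $0$-enclosing Jordan curve must be the two critical points adjacent to $0$ on the real line; the resulting closed curve is simple exactly when both of these are simple zeros of $G$, i.e.\ when the double root is \emph{not} adjacent to $0$, which is the lemma's condition (the case where the double root is adjacent to $0$ yields the non-simple enclosing curve of Remark \ref{RMK}).

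Two smaller points. Your arc-end bookkeeping is off: at a critical point of order $2$ (the double root, and $\infty$) the local model $b\sim \mathrm{const}+c(z-z_0)^3$ gives \emph{six} rays of $\Gamma_b$, hence four non-real ray-ends, not two; the correct census is $4+4+2+2=12$ non-real ray-ends, i.e.\ six non-real edges of the net, and your ``$6$ ends $=3$ arcs'' count cannot support the matching argument. Also, ``among $z_1,z_2,z_3$ the two nearest the origin are of opposite signs precisely when $|z_1|>|z_{\mathrm{same}}|$'' is false as stated (take $z_{\mathrm{same}}=1$, $z_1=2$, $z_{\mathrm{opp}}=-10$); the relevant notion is the two roots \emph{adjacent} to $0$, one on each side, which you do use correctly later.
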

\begin{proof}
Let $z_1, z_2, z_3, z_4$ be real zeros of $G(z)$ with $z_2=z_3$. By Lemma \ref{HHABB}, let $z_1$ and $z_4$ have different signs. Then $z_1, z_2, z_3$ or $z_2, z_3, z_4$ have the  same sign.  We choose the first case and by symmetry the other case works similarly.  Assume that $|z_1|< |z_2|.$ Since $z_1$ and $z_4$ are simple critical points of $b(z)$, locally at these points, $b(z)$ consists of a line segment on the real line intersecting a non-real curve only once. Since $z_2$ is a real critical point of $b(z)$ of order $2$, locally  $b(z)$ consists of a real line intersecting two non-real curves only at $z_2$.  The the point $z=\infty$ has the same order as $z_2$, and so locally  $b(z)$ behaves similarly at these points.
By Lemma \ref{lem}, no curve through $z_1$ passes through $z_2$ since $b(z)$ is a non-constant  function. In addition, if the two curves through $z_2$ pass through $\infty$ and $z_4$ respectively, this implies  that a curve through $z_1$ passes through $\infty$. However this means that the curve connecting  $z_1$ to $\infty$ and the one connecting $z_2$ to $z_4$ intersect outside the real axis, creating a non-real critical point of $b(z)$, contradicting the fact that $G(z)$ is real-rooted.  
 The conclusion is that the two complex curves through $z_2$ pass through $\infty$, and the curve through  $z_1$  necessarily closes through $z_4$ into a Jordan curve enclosing $0$.  See for example Fig. \ref{fig4}. This completes the proof. 
\end{proof}

\begin{remark} \label{RMK}
\end{remark}
\begin{enumerate}
[(a)]\item
In Lemma \ref{SHAMMALE}, if  the multiple root is smaller in absolute value than the simple zero of the same sign i.e., $|z_2|<|z_1|$, then using a similar argument as in the previous proof, we obtain that $b^{-1}(\mathbb{R})$ contains a non-simple curve containing $0$ in the interior, formed as follows: Two curves pass through $z_2$, of which one closes back through $z_4$ to form a non-simple curve enclosing $0$ while the other connects $z_2$  to $\infty$ since 
by Lemma \ref{lem}, (see Fig. \ref{fig5}).

\item Using similar arguments, if  $z_2=z_3$ has a different sign from that of $z_1$ and $z_4$,  we obtain the same results as in Remark \ref{RMK}$(a)$, (see Fig. \ref{fig6}).

\item A similar argument holds when $G(z)$ has two pairs of equal zeros and when $G(z)$ is hyperbolic with three equal zeros. (See Fig. \ref{fig7} and Fig. \ref{fig8} respectively).
\end{enumerate}
\begin{figure}[!htb]
   \begin{minipage}{0.3\textwidth}
     \centering
     \includegraphics[height=4cm, width=4cm]{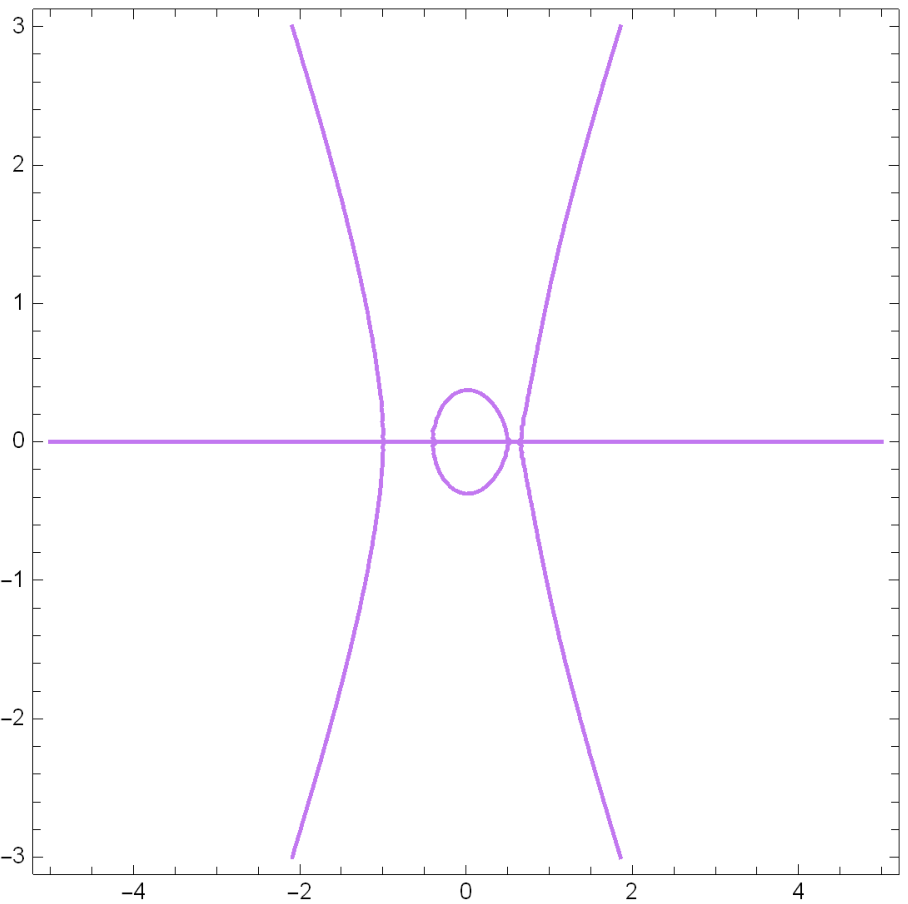}
     \caption{}\label{fig3}
   \end{minipage}\hfill
   \begin{minipage}{0.3\textwidth}
    \centering
     \includegraphics[height=4 cm, width=4cm]{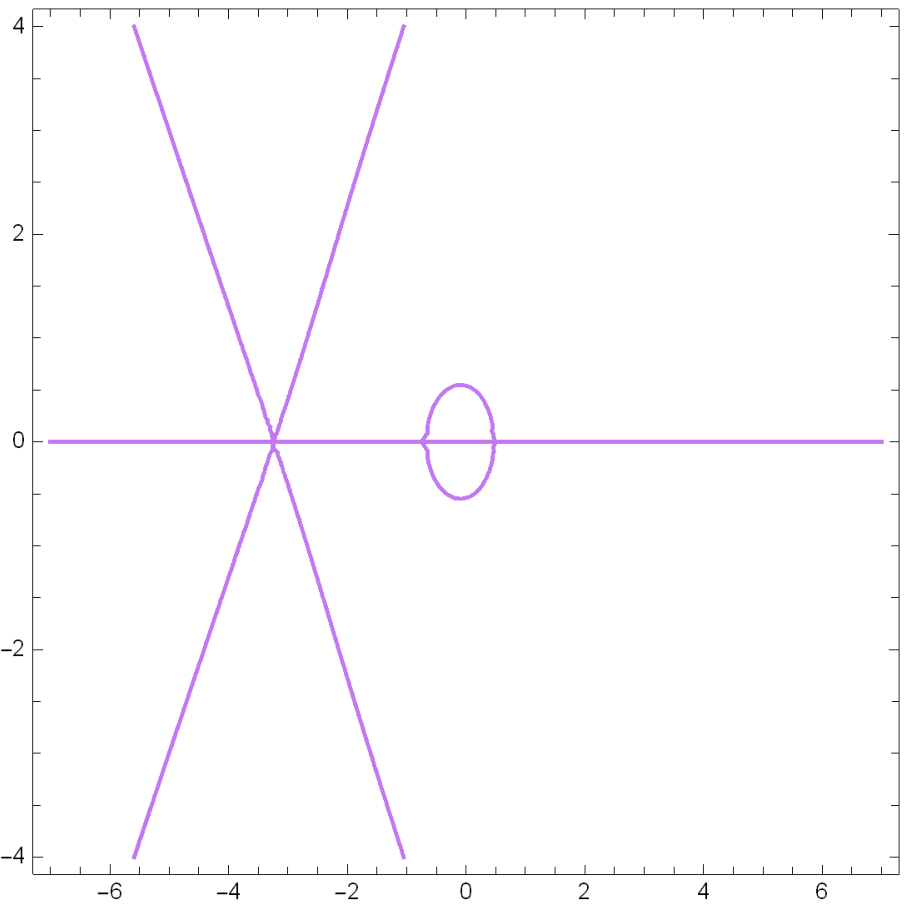}
    \caption{}\label{fig4}
   \end{minipage}\hfill
   \begin{minipage}{0.3\textwidth} 
     \centering
     \includegraphics[height=4cm, width=4cm]{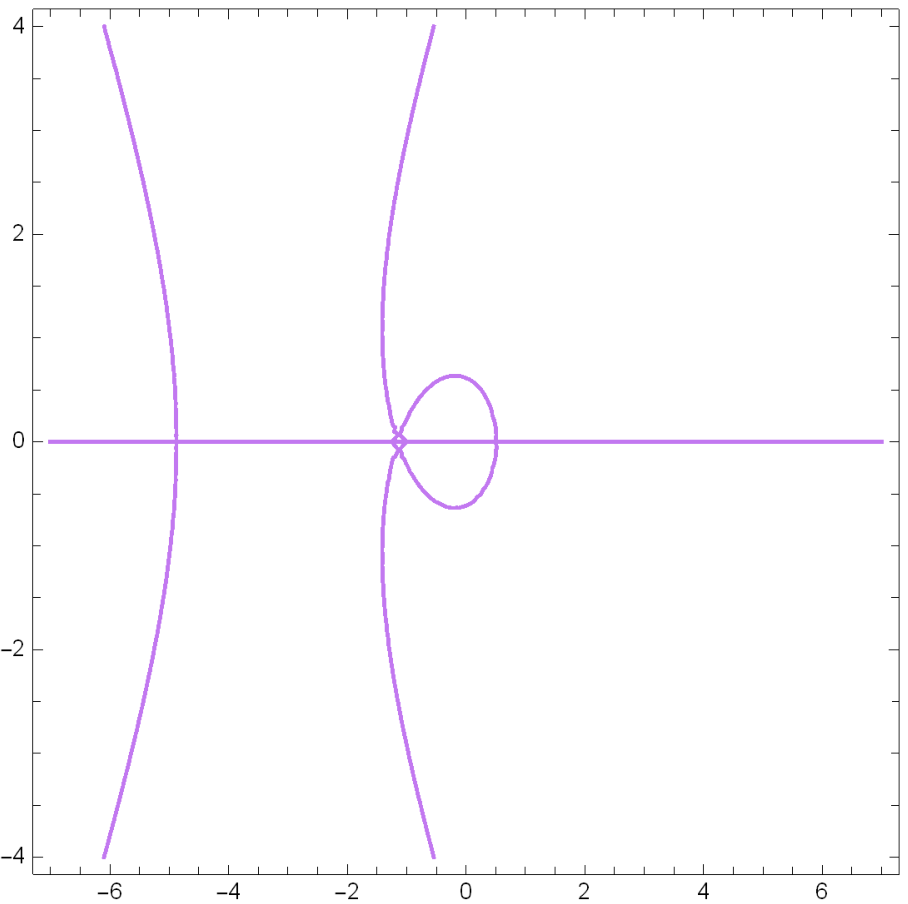}
     \caption{}\label{fig5}
   \end{minipage}
 \end{figure}
\medskip
  \begin{figure}[!htb]
   \begin{minipage}{0.3\textwidth}
     \centering
     \includegraphics[height=4cm, width=4cm]{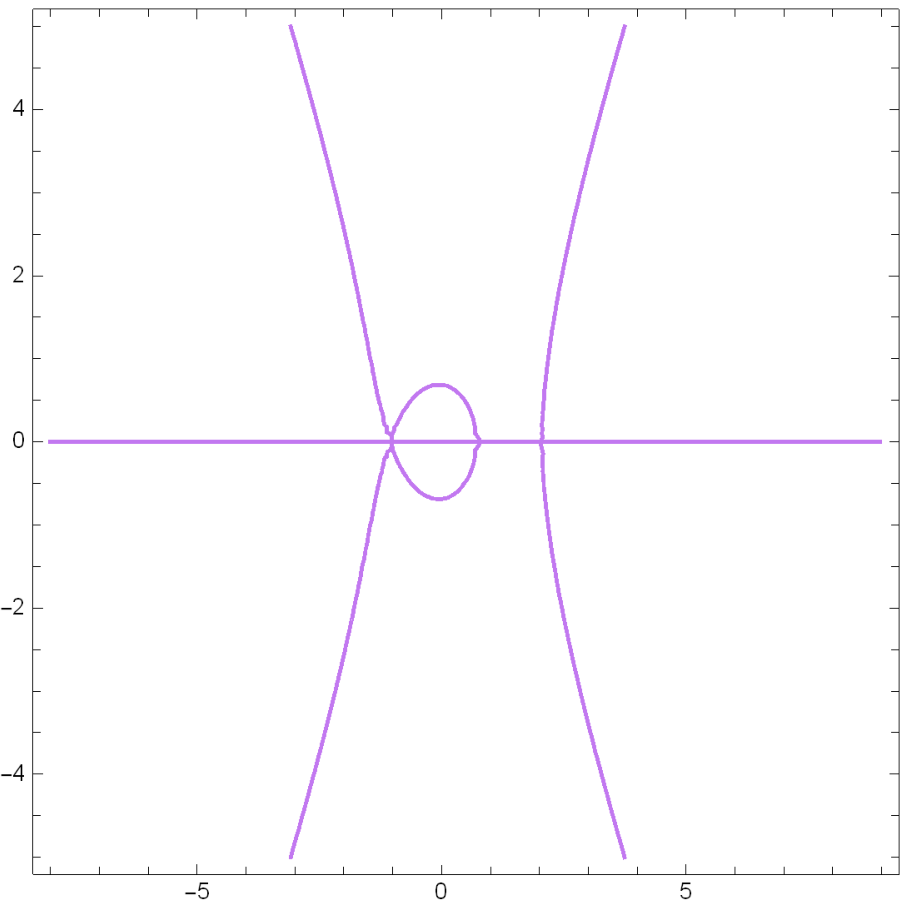}
     \caption{}\label{fig6}
   \end{minipage}\hfill
   \begin{minipage}{0.3\textwidth}
    \centering
     \includegraphics[height=4cm, width=4cm]{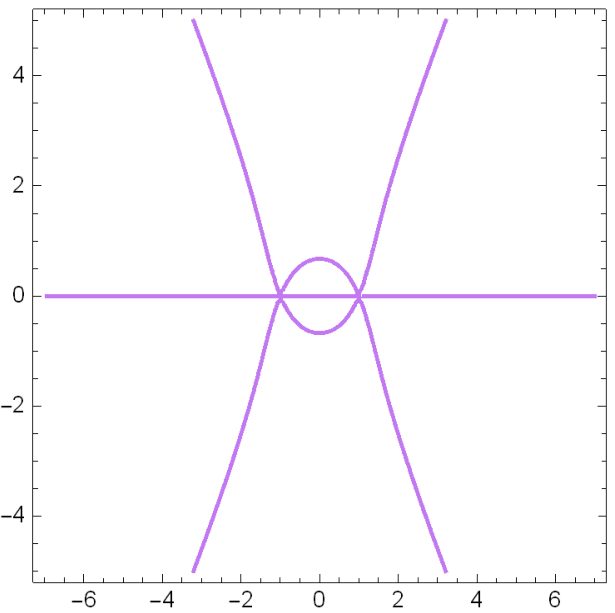}
    \caption{}\label{fig7}
   \end{minipage}\hfill
   \begin{minipage}{0.3\textwidth}
     \centering
     \includegraphics[height=4cm, width=4cm]{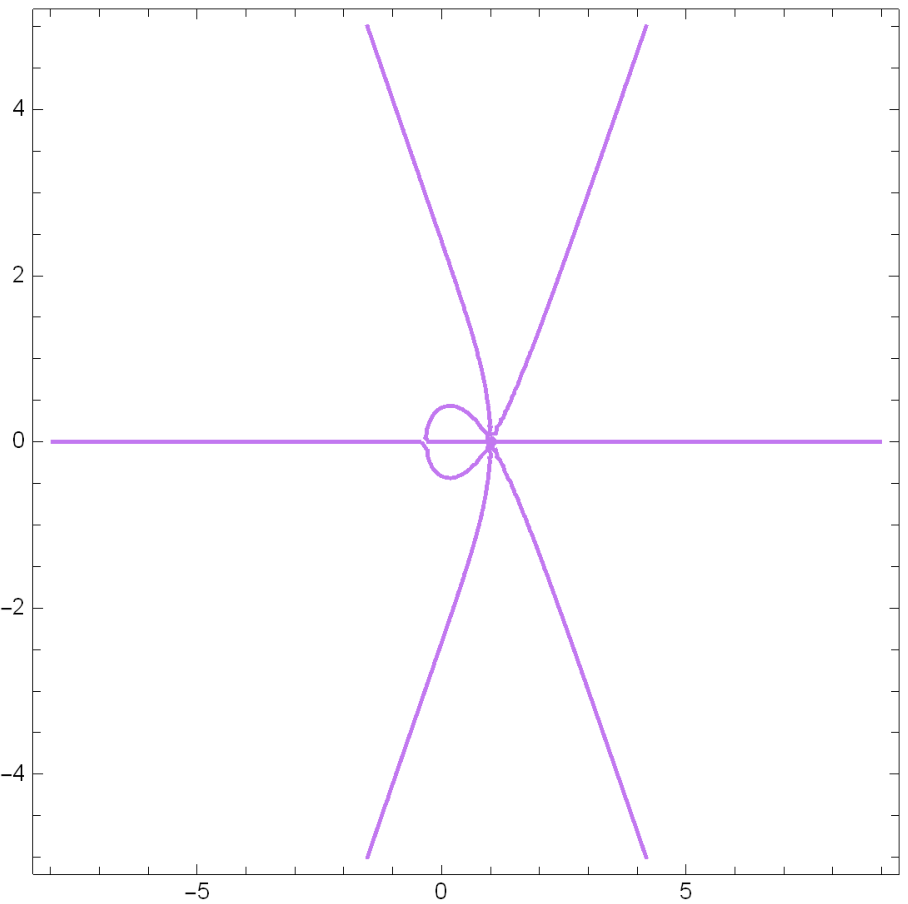}
     \caption{}\label{fig8}
   \end{minipage}
 \end{figure}
 
\medskip
  We now settle  Theorem 2 (a):
   \medskip
   
  \textit{Proof of Theorem 2 (a)}.
Suppose that $G(z)$ is hyperbolic with either distinct zeros or with one double real zero
whose absolute value is larger than that of a simple zero of the same sign. Then Lemmas \ref{SHAM} and \ref{SHAMMALE} show that  in either case, $b^{-1}(\mathbb{R})$ contains a smooth Jordan curve with $0$ in its interior. Hence Theorem $1(ii)$ is satisfied.
By the implication $(ii)\Rightarrow (iii)$ in Theorem $1$, we deduce  that all the zeros of $P_n(z)$ are real for all positive integer $n$.

\medskip
The cases where the hyperbolic polynomial $G(z)$ has repeated zeros are covered in Remark \ref{RMK} where it is shown that $b^{-1}(\mathbb{R})$ contains a non-simple curve with $0$ in the interior. 
Note that for each positive integer $n$, the zeros of $P_n(z)$ will continuously depend on the parameters $\alpha, \beta$ and $\gamma$. Now, since the zeros of $P_n(z)$ are real when the zeros of $G(z)$ are real and distinct, they will remain real in the limit when $G(z)$ has repeated zeros. 

We note that the condition that $G(z)$ is hyperbolic is equivalent to  \begin{eqnarray*}
\alpha<0,~~~~~ -\frac{\alpha^2}{36} \leq \gamma \leq  \frac{\alpha^2}{12}~~\mbox{and}~ \bigtriangleup(G)\geq 0 \end{eqnarray*}
 follows immediately from applying  Theorem \ref{lovence}$(b)~~\&~~ c(i)$ to $J(u)$ (and consequently to $G(z)$). This completes the first part of Theorem $2$.

\section{The Laurent polynomial with two real and a pair of complex conjugate critical points } \label{sect5}

In this section we prove the second part of Theorem $2$ where  $b(z)$ has two distinct real critical points and a pair of complex conjugate critical points. Our main task is to determine when  $b^{-1}(\mathbb{R})$ contains  a Jordan curve enclosing $0$.  

\begin{proposition} Suppose that $f(z) = f(x + iy) = u(x, y) + iv(x, y)$ is differentiable at
$z_0 = x_0 + iy_0$. Then the partial derivatives of $u$ and $v$ exist at $z_0 = (x_0, y_0)$ and
\begin{eqnarray}\label{eddy}
 f'(z_0) = \frac{\partial u}{\partial x}(x_0, y_0) + i\frac{\partial v}{\partial x}(x_0, y_0) = \frac{\partial v}{\partial y}(x_0, y_0) - i\frac{\partial u}{\partial y}(x_0, y_0). \end{eqnarray}
Thus we obtain the Cauchy-Riemann equations 
\begin{eqnarray}\label{eddy2}
\frac{\partial u}{\partial x}=\frac{\partial v}{\partial y}~~ \mbox{and}~~ \frac{\partial u}{\partial y}=-\frac{\partial v}{\partial x}.
\end{eqnarray}  
  \end{proposition}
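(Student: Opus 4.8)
The statement in question is the standard one relating complex differentiability to the existence of partial derivatives and the Cauchy--Riemann equations, so the plan is essentially to reproduce the classical textbook argument by approaching $z_0$ along two coordinate directions.

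First I would write the difference quotient $\frac{f(z_0+h)-f(z_0)}{h}$ and use the hypothesis that $f$ is differentiable at $z_0$, meaning this quotient has a limit (equal to $f'(z_0)$) as the complex increment $h\to 0$ \emph{in any manner}. The idea is to exploit this by restricting $h$ to two particular paths. Taking $h = \Delta x$ real and letting $\Delta x\to 0$, the quotient becomes $\frac{u(x_0+\Delta x,y_0)-u(x_0,y_0)}{\Delta x} + i\,\frac{v(x_0+\Delta x,y_0)-v(x_0,y_0)}{\Delta x}$, whose limit shows that $\frac{\partial u}{\partial x}(x_0,y_0)$ and $\frac{\partial v}{\partial x}(x_0,y_0)$ exist and that $f'(z_0) = \frac{\partial u}{\partial x}(x_0,y_0) + i\,\frac{\partial v}{\partial x}(x_0,y_0)$, which is the first expression in \eqref{eddy}. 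Then I would take $h = i\,\Delta y$ purely imaginary and let $\Delta y\to 0$; since $\frac{1}{i} = -i$, the quotient becomes $-i\,\frac{u(x_0,y_0+\Delta y)-u(x_0,y_0)}{\Delta y} + \frac{v(x_0,y_0+\Delta y)-v(x_0,y_0)}{\Delta y}$, giving existence of $\frac{\partial u}{\partial y}(x_0,y_0)$ and $\frac{\partial v}{\partial y}(x_0,y_0)$ together with $f'(z_0) = \frac{\partial v}{\partial y}(x_0,y_0) - i\,\frac{\partial u}{\partial y}(x_0,y_0)$, the second expression in \eqref{eddy}.

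Finally, equating the two formulas for $f'(z_0)$ and comparing real and imaginary parts yields $\frac{\partial u}{\partial x} = \frac{\partial v}{\partial y}$ and $\frac{\partial v}{\partial x} = -\frac{\partial u}{\partial y}$ at $(x_0,y_0)$, which is exactly \eqref{eddy2}. There is no real obstacle here: the only subtlety worth stating carefully is that differentiability of $f$ as a function of the complex variable is what licenses evaluating the single limit along the two chosen directions and getting the same value $f'(z_0)$ both times; the existence of the partial derivatives is then a byproduct rather than an assumption. I would keep the write-up short, emphasizing that the equality of the two directional limits is the crux and the Cauchy--Riemann equations are an immediate algebraic consequence.
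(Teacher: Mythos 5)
Your argument is correct and complete: restricting the difference quotient to real increments $h=\Delta x$ and purely imaginary increments $h=i\,\Delta y$, using that complex differentiability forces both directional limits to equal $f'(z_0)$, and then comparing real and imaginary parts is exactly the classical proof of this proposition. The paper itself states this result without proof (it is a standard fact from complex analysis used only as a stepping stone to characterize critical points via the vanishing of all four partials), so there is nothing to compare against beyond noting that your write-up supplies the canonical argument the paper implicitly invokes.
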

From \eqref{eddy} and \eqref{eddy2}, we have $f'(z_0)=0$ if and only if for $z_0=(x_0, y_0)$,
  \begin{eqnarray} \label{Rest1}
 \frac{\partial u}{\partial x}(x_0, y_0) =0 =\frac{\partial v}{\partial x}(x_0, y_0)=\frac{\partial u}{\partial y}(x_0, y_0)=\frac{\partial v}{\partial y}(x_0, y_0). \end{eqnarray}

\begin{lemma}\label{HHABxxB}
Let  $Q(z)= \lambda_1 z^4+\lambda_2z^3+\lambda_3z^2+1$ be a real quartic univariate polynomial with two real zeros and a pair of complex conjugate zeros.  The real zeros of $Q(z)$ have different signs if and only if  $\lambda_1<0$.
\end{lemma}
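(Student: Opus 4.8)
The plan is to analyze the real quartic $Q(z)=\lambda_1 z^4+\lambda_2 z^3+\lambda_3 z^2+1$ via its reciprocal polynomial, exactly as in the proof of Lemma~\ref{HHABB}. Since $Q(0)=1\neq 0$, none of the zeros of $Q$ is zero, so I may pass to $M(\tau)=\tau^4+\lambda_3\tau^2+\lambda_2\tau+\lambda_1$, whose zeros are the reciprocals $\tau_i=1/z_i$ of the zeros of $Q$. Being real and reciprocal to $Q$, the polynomial $M$ has the same number of real zeros as $Q$ (here two) and the corresponding pair of complex conjugate zeros. Crucially, $1/z_i$ has the same sign as $z_i$ when $z_i$ is real, so the two real zeros of $Q$ have different signs if and only if the two real zeros of $M$ have different signs.

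Next I would identify the product of the two real zeros of $M$. Write $M(\tau)=(\tau^2+a\tau+b)(\tau^2-a\tau+c)$ where $\tau^2+a\tau+b$ carries the two real zeros and $\tau^2-a\tau+c$ the complex conjugate pair (the linear coefficients are opposite because $M$ has no cubic term, so the four zeros sum to zero). Then the product of the two real zeros is $b$, and the two real zeros have opposite signs precisely when $b<0$. Comparing constant terms in the factorization gives $\lambda_1 = bc$. Since $\tau^2-a\tau+c$ has non-real zeros, its discriminant $a^2-4c<0$, which forces $c>0$. Therefore $\lambda_1=bc$ has the same sign as $b$, and we conclude $b<0 \iff \lambda_1<0$. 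Combining with the previous paragraph, the real zeros of $Q$ have different signs if and only if $\lambda_1<0$, which is the claim.

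The one point requiring a little care is the justification that the factorization $M(\tau)=(\tau^2+a\tau+b)(\tau^2-a\tau+c)$ with the stated sign pattern on the linear terms actually exists under the hypothesis that $M$ has exactly two real zeros and one complex conjugate pair: the two real zeros form a real quadratic factor $\tau^2+a\tau+b$, the conjugate pair forms a real quadratic factor $\tau^2+a'\tau+c$, and matching the vanishing cubic coefficient of $M$ gives $a+a'=0$, i.e.\ $a'=-a$. The remaining obstacle, which is minor, is to handle the boundary possibility that one of the two real zeros of $Q$ (equivalently of $M$) is zero — but this is already excluded since $Q(0)=M(0)\cdot 0^{?}$; more precisely $M(0)=\lambda_1$ and if $\lambda_1=0$ then $Q$ has degree $<4$, contradicting that $Q$ is a genuine quartic with the four zeros as described, so $b\neq 0$ and the dichotomy $b<0$ versus $b>0$ is exhaustive. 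Hence the equivalence follows cleanly, and I expect no serious difficulty beyond bookkeeping of signs.
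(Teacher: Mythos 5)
Your proof is correct and, despite the detour through the reciprocal polynomial $M(\tau)=\tau^4+\lambda_3\tau^2+\lambda_2\tau+\lambda_1$, it rests on exactly the same observation as the paper's proof: by Vieta the product of the four zeros of $Q$ equals $1/\lambda_1$ (in your coordinates, $M(0)=\lambda_1=bc$), the complex-conjugate pair contributes a strictly positive factor ($c=|\sigma|^2>0$, resp.\ $c^2+d^2>0$), and hence the product of the two real zeros has the sign of $\lambda_1$. The paper reads this off directly from the constant term of $Q$ as $ab(c^2+d^2)=1/\lambda_1$, so your argument is a correct, slightly longer rendering of the same idea.
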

\begin{proof}
Let $a,b,c+id, c-id$  be the zeros of $Q(z)$, where $a,b,c,d \in \mathbb{R}, d \neq 0$ and  $\lambda_1<0.$ All the roots of $Q(z)$ are  non-zero since $Q(0)=1$. Note that
\begin{equation*} \label{eq11}
\begin{split}
Q(z) & = (z-a)(z-b)(z-(c+id))(z-(c-id)) \\
 & = z^4-(a + b + 2 c) z^3 + (a b + 2 a c + 2 b c + c^2 + d^2) z^2 -\\ & \quad(2 a b c + a c^2 + b c^2 + a d^2 + b d^2) z+ a b c^2 + a b d^2.
\end{split}
\end{equation*}
Comparison of the coefficients of the constant term gives
\begin{eqnarray}\label{HAB1222}
ab(c^2+d^2)=1/\lambda_1.
\end{eqnarray}
Since $(c^2+d^2)>0$ and $\lambda_1<0$, then Equation \eqref{HAB1222} holds true if and only if $ab<0$, which proves the claim.
\end{proof}

\subsection{Singular points of a net} 

Let us establish the conditions guaranteeing the location of the non-real critical points of $b(z)$  on the curve $\Gamma_b$. 
 We begin with the following basic definition. %
 \begin{definition}
 An affine algebraic plane curve over $\mathbb{R}$ or a real affine algebraic plane
curve is a subset  of $\mathbb{C}^2$ of the form $\mathcal{C}=\{(x, y) \in \mathbb{C}^2: F(x, y) = 0\}$ for some polynomial $F(x,y) \in \mathbb{R}[x, y]$.
 The real part of this curve is its intersection with  $\mathbb{R}^2$, i.e., the set of real solutions of $F(x,y) \in \mathbb{R}[x, y]$.  \end{definition}

\begin{definition}\label{label} (see \cite{Walk})
Let $\mathcal{C}$ be an affine  algebraic curve over $\mathbb{R}$ defined by
$F(x,y) \in \mathbb{R}[x, y]$ and $p = ({x^{*}},y^*) \in\mathcal{C}$. Then $p$ is a point of multiplicity $m$ on $\mathcal{C}$ if and only if all the partial derivatives of $F(x,y)$ up to and including order $(m-1)$ vanish at $p$, but at least one partial derivative of order $m$ does not vanish at $p$.
A point of multiplicity two or more is said to be a singular point, in particular the point of multiplicity two is called a double point. A point of multiplicity one is called a simple point.  If there is at least one singular point on the curve, we say that $\mathcal{C}$ is \textit{ a singular curve}. (Otherwise $\mathcal{C}$ is called smooth or nonsingular.)
\end{definition}
Therefore the necessary and sufficient condition for  $p  \in \mathcal{C}$ to be singular is  that
\begin{eqnarray}\label{LL}
\frac{\partial F}{\partial x}(p)=\frac{\partial F}{\partial y}(p)=0.
\end{eqnarray}
\medskip

The following lemma is a reformulation of the one found in  \cite{IP}. 
\begin{lemma}\label{kalu} 
Let $\mathcal{C}$ be a real algebraic curve given by $F(x, y) = 0$. If the coefficients of $F$ are continuously varied, the topology of the real zero set of  $F(x, y)$ changes only when the coefficients pass through the values for which $\mathcal{C}$ is singular.
\end{lemma}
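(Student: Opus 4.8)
\emph{Proof proposal.} The plan is to reduce the statement to a one-parameter situation and to recognise it as an instance of Ehresmann's fibration theorem. Fix a continuous path $t\mapsto F_t$, $t\in[0,1]$, in the space of real coefficients such that the curve $\mathcal{C}_t=\{F_t=0\}$ is nonsingular for every $t$; since nonsingularity fails exactly along the discriminant hypersurface in coefficient space (a proper algebraic subset), it suffices to show that for such a path all the real zero sets $\mathcal{C}_t\cap\mathbb{R}^2$ are ambient isotopic in $\mathbb{R}^2$, which is stronger than the asserted invariance of topology. First I would record the local picture: by Definition \ref{label}, nonsingularity of $\mathcal{C}_t$ means that at every point $p$ of its real zero set at least one of $\partial F_t/\partial x(p),\ \partial F_t/\partial y(p)$ is nonzero, so the implicit function theorem shows $\mathcal{C}_t\cap\mathbb{R}^2$ is a smooth embedded $1$-manifold, and $\nabla F_t$ varies continuously with $(p,t)$ and stays nonzero on a neighbourhood of the zero set.

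Next I would pass to the total space $Z=\{(x,y,t)\in\mathbb{R}^2\times[0,1]:F_t(x,y)=0\}$ with the projection $\pi:Z\to[0,1]$. The map $\Phi(x,y,t)=F_t(x,y)$ has at each point of $Z$ surjective differential onto $\mathbb{R}$ already in the $(x,y)$ directions — this is precisely nonsingularity — so $Z$ is a smooth surface and $\pi$ is a submersion. If $\pi$ were proper, Ehresmann's theorem would give local triviality of $\pi$ over $[0,1]$; concretely, one lifts the vector field $\partial/\partial t$ to $Z$, flows along it, and extends the resulting diffeomorphism $\mathcal{C}_0\cap\mathbb{R}^2\cong\mathcal{C}_t\cap\mathbb{R}^2$ to an ambient isotopy of $\mathbb{R}^2$ via a cutoff function. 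This yields the claim along the path, hence the lemma.

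The one genuine obstacle is that $\pi$ need not be proper: a branch of $\mathcal{C}_t\cap\mathbb{R}^2$ can escape to infinity, so the flow need not be complete and a whole branch could disappear. I would resolve this by compactifying — homogenise $F_t$ to $\widetilde F_t$ and work in $\mathbb{R}P^2$ (or, exactly as in the application of this lemma to the nets $\Gamma_b$, in $\mathbb{C}P^1$). The points at infinity of $\mathcal{C}_t$ are controlled by the top-degree part of $F_t$ alone, which varies continuously, so one checks that passing to the compactification introduces no singular parameters beyond those already excluded; the total space is then compact, $\pi$ is proper, and Ehresmann applies verbatim. Restricting the ambient isotopy to the affine chart gives the statement; in particular, ambient isotopic — hence homeomorphic — real zero sets show that the topology is locally constant in the coefficients away from the discriminant set, which is what is claimed. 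For the use made of this lemma here, the curves in question are the nets $\Gamma_b\subset\mathbb{C}P^1$, which are automatically compact, so the properness step is vacuous and the conclusion is immediate once the submersion property is verified.
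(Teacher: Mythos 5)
The paper gives no proof of this lemma at all: it is introduced as ``a reformulation of the one found in \cite{IP}'' (Petrovski), so there is no internal argument to compare yours against, and your Ehresmann-based proof is the standard strategy one would use. The first two steps are sound: away from the discriminant the total space $Z$ is a smooth surface, $\pi$ is a submersion, and --- after replacing your merely continuous path by a smooth one, which is harmless since the nonsingular locus is open in coefficient space --- properness is indeed the only obstruction to local triviality.

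The genuine gap is in your final paragraph. The claim that homogenising and passing to $\mathbb{R}P^2$ ``introduces no singular parameters beyond those already excluded'' is exactly the point that fails: affine nonsingularity says nothing about the projective closure, and singularities at infinity can change the affine real topology while every $\mathcal{C}_t$ stays nonsingular. Concretely, take $F_t(x,y)=xy^2-y-t$. Then $\partial F_t/\partial x=y^2$ and $\partial F_t/\partial y=2xy-1$ have no common zero in $\mathbb{C}^2$ for any $t$, so each $\mathcal{C}_t$ is nonsingular in the sense of Definition \ref{label}; yet the real zero set has three connected components at $t=0$ (the line $y=0$ together with the two branches of $xy=1$) and only two for $t\neq 0$ (the graph of $x=(y+t)/y^2$ over $y\neq0$). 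The projective closure is singular at $[1:0:0]$ for every $t$, which is precisely the ``singular parameter at infinity'' your argument would need to rule out but cannot. So the lemma as literally stated, with ``singular'' read affinely, is false, and no proof can close this step; the true statement (Petrovski's) is projective, or requires compactness of the real zero set together with rigidity at the added boundary points. Your closing observation is therefore not a footnote but the essential repair for this paper: the curves actually treated are the nets $\Gamma_b\subset\mathbb{C}P^1$, which are compact, and the local structure at the two added points $0$ and $\infty$ is determined by the orders of the poles of $b$ there, which are constant since $\gamma\neq0$; hence $\pi$ is proper and the only singularities that can occur along a deformation are the affine ones computed in Lemma \ref{Kiwede}. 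You should either add such a hypothesis to the lemma or state and prove it in the compactified form in which it is used.
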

 
Now, with $b(z)= - \frac{1}{z}+ \alpha z  - \beta z^2 + \gamma z^3$ and $z \in \mathbb{C}$, we have  that $z \in b^{−1}(\mathbb{R})$ if and only if $\Im (b(z)) = 0$, and this latter condition can be turned into a polynomial equation $R(x, y) = 0,$ where $R \in \mathbb{R}[x, y],~~ x = \Re (z)$ and $y = \Im(z).$ It can be seen from \eqref{Rest1} that the singular points of the curve  $b^{-1}(\mathbb{R})$ are the critical points of $b(z)$.  Note that
\begin{eqnarray*}\label{wed12}
R(x,y)=y + \alpha x^2 y -2 \beta x^3 y + 3 \gamma x^4 y + \alpha y^3 - 2 \beta x y^3 + 2 \gamma x^2 y^3 - \gamma y^5,
\end{eqnarray*}
and is divisible by some power of $y$ since $R(x,0) = 0$. In our case, we have
\begin{eqnarray}\label{wed31X}
R(x,y) = y S(x,y)
\end{eqnarray}
where 
\begin{eqnarray}\label{wed312}
S(x,y)=1 + \alpha x^2 - 2 \beta x^3 +3 \gamma x^4 + \alpha y^2 - 2 \beta x y^2 + 2 \gamma x^2 y^2 - \gamma y^4
\end{eqnarray}
 and $S(x,0) \neq 0$.
 
 \begin{lemma} \label{Kiwede}
Let $b(z), G(z)$ and $S(x,y)$ be defined as above and $\bigtriangleup(G)$ be the discriminant of $G(z)$. Furthermore, let $\mathcal{C}$ be the curve defined by $S(x,y)=0$ and  $p=(x^{*},y^*)\in \mathcal{C}$.
Then $p$ is a singular point of $\mathcal{C}$ if and only if  either
\begin{eqnarray*}
3 \beta ^2 - 8\alpha \gamma \leq 0 ~~~~\mbox{and}~~~~  16 \gamma^2 \left(\alpha^2+4 \gamma\right)-8 \alpha \beta ^2 \gamma+\beta ^4=0
\end{eqnarray*}
or
\begin{eqnarray*}
3 \beta ^2 - 8\alpha \gamma \geq 0 ~~~~\mbox{and}~~~~ \bigtriangleup(G)=0.
\end{eqnarray*}
\end{lemma}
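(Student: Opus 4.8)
The plan is to compute the singular locus of the curve $\mathcal C=\{S(x,y)=0\}$ directly from Definition~\ref{label}, i.e.\ to solve the system $S=S_x=S_y=0$ and then to re-express the resulting conditions on $(\alpha,\beta,\gamma)$ in terms of $\bigtriangleup(G)$ and the auxiliary quantity $3\beta^2-8\alpha\gamma$. By \eqref{Rest1} and the discussion preceding the lemma, the singular points of $b^{-1}(\mathbb R)$ are exactly the critical points of $b(z)$, i.e.\ the zeros of $G(z)$, so a priori the singular points of $\mathcal C$ are the \emph{real} zeros of $G(z)$ together with possibly the non-real zeros of $G$ that happen to land on $\mathcal C$. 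Thus the content of the lemma is really: the curve $\mathcal C$ acquires a singular point precisely when $G$ has a repeated real zero ($\bigtriangleup(G)=0$ in the regime where $G$ has enough real zeros), \emph{or} when a complex-conjugate pair of zeros of $G$ lies on $\Gamma_b$ — and the second alternative is what the first displayed condition $16\gamma^2(\alpha^2+4\gamma)-8\alpha\beta^2\gamma+\beta^4=0$ encodes.

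Concretely I would proceed as follows. First, write $S_y(x,y)=2y\,(\alpha-2\beta x+2\gamma x^2-2\gamma y^2)$, so that on $\mathcal C$ either $y=0$ (the point is real, and then $S(x,0)=G(x)/x^{?}$... more precisely $S(x,0)=3\gamma x^4-2\beta x^3+\alpha x^2+1=G(x)$, wait — indeed $S(x,0)=G(x)$) or $\alpha-2\beta x+2\gamma x^2-2\gamma y^2=0$. In the first case $p=(x^*,0)$ is a real zero of $G$ and, since $S_x(x,0)=G'(x)$, the point is singular iff $G(x^*)=G'(x^*)=0$, i.e.\ $x^*$ is a multiple real root of $G$; combining with the structure theory of Section~\ref{sect3} this gives the branch "$3\beta^2-8\alpha\gamma\ge 0$ and $\bigtriangleup(G)=0$" (the inequality being exactly the condition from Theorem~\ref{lovence}/the discriminant analysis that guarantees the multiple root is real rather than a complex double root). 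In the second case, I substitute $y^2=\dfrac{\alpha-2\beta x+2\gamma x^2}{2\gamma}$ into $S=0$ and into $S_x=0$; after clearing denominators these become two polynomial equations in $x$ alone with coefficients in $(\alpha,\beta,\gamma)$, and eliminating $x$ (by taking their resultant, or by solving the lower-degree one for $x$ and back-substituting) yields a single polynomial relation among the parameters, which I expect to factor so that the relevant factor is $16\gamma^2(\alpha^2+4\gamma)-8\alpha\beta^2\gamma+\beta^4$. The accompanying inequality $3\beta^2-8\alpha\gamma\le0$ then comes from requiring $y^2\ge 0$ at the solution, i.e.\ from the condition that the would-be singular point is genuinely non-real — equivalently, that $G$ has a complex double zero rather than a real one, which is the complementary case to $\bigtriangleup(G)\ge 0$ with real multiple root.

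The two halves then have to be matched: in the regime $3\beta^2-8\alpha\gamma\ge 0$ a repeated zero of $G$ is real and singularity of $\mathcal C$ is equivalent to $\bigtriangleup(G)=0$; in the regime $3\beta^2-8\alpha\gamma\le 0$ the repeated-root phenomenon of $G$ would be a complex conjugate double zero (not detected by $y=0$), and instead the curve $\mathcal C$ becomes singular exactly when the (honest, non-real) pair of critical points of $b$ lands on $\Gamma_b$, which is the equation $16\gamma^2(\alpha^2+4\gamma)-8\alpha\beta^2\gamma+\beta^4=0$. I would double-check the boundary case $3\beta^2-8\alpha\gamma=0$ separately to confirm both formulas agree there, and verify $\gamma\neq 0$ is used to legitimately divide by $2\gamma$ when solving for $y^2$.

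The main obstacle I anticipate is purely computational: the elimination step in the non-real case produces a resultant of two polynomials of degree $3$–$4$ in $x$, and one must identify and discard the spurious factors (those corresponding to $\gamma=0$, to the substitution being degenerate, or to real solutions already accounted for) to isolate the clean factor $16\gamma^2(\alpha^2+4\gamma)-8\alpha\beta^2\gamma+\beta^4$. A cleaner route that avoids brute-force resultants is to use the \emph{a priori} identification of singular points with zeros of $G$: a non-real conjugate pair $z_0,\bar z_0$ of zeros of $G$ lies on $\Gamma_b=b^{-1}(\mathbb R)\cup\{0,\infty\}$ iff $b(z_0)\in\mathbb R$, and since $b(z_0)$ is then forced to equal $b(\bar z_0)=\overline{b(z_0)}$ automatically, the real constraint is one real equation; translating "the two non-real roots of $G$ have real $b$-value" into the resultant of $G(z)$ and $\Im b(z)$ (or, better, into a symmetric-function identity in the roots of $G$) should land directly on $16\gamma^2(\alpha^2+4\gamma)-8\alpha\beta^2\gamma+\beta^4$ with the sign condition $3\beta^2-8\alpha\gamma\le 0$ appearing as the condition "$G$ has exactly two real zeros" (so that the other two are a genuine conjugate pair). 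I would present whichever of these two computations comes out shorter.
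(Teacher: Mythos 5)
Your proposal follows essentially the same route as the paper: both reduce to solving $S=S_x=S_y=0$, split on the factor $y$ in $S_y=2y(\alpha-2\beta x+2\gamma x^2-2\gamma y^2)$ so that the real branch becomes ``$G$ has a multiple real root'' (equivalently $\bigtriangleup(G)=0$ together with $3\beta^2-8\alpha\gamma\ge 0$, which rules out the conjugate-double-root configuration $\beta=0$, $\alpha>0$, $\gamma>0$), while the non-real branch, after substituting $y^2=(\alpha-2\beta x+2\gamma x^2)/(2\gamma)$, collapses to $x^*=\beta/(4\gamma)$, $y^{*2}=(8\alpha\gamma-3\beta^2)/(16\gamma^2)\ge 0$ (giving $3\beta^2-8\alpha\gamma\le 0$) and the single relation $64\gamma^3+(4\alpha\gamma-\beta^2)^2=16\gamma^2(\alpha^2+4\gamma)-8\alpha\beta^2\gamma+\beta^4=0$. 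The only differences are organizational — the paper solves $S_x=S_y=0$ first and then imposes $S=0$, whereas you substitute the $S_y$ constraint into both $S$ and $S_x$ and eliminate — and the spurious factor you anticipate does occur: the reduced $S_x$ factors as $(4\gamma x-\beta)(4\gamma x^2-2\beta x+\alpha)$, the second factor forcing $y^2=-x^2$ and hence contributing no admissible singular point.
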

\begin{proof}

By \eqref{LL},  $p=(x^{*},y^*)\in \mathbb{R}^2$ is a singular point of $R(x,y)$ if and only if 
\begin{eqnarray}\label{uuu}
R({x^{*}},y^*) = \frac{\partial R}{\partial x}({x^{*}},y^*) = \frac{\partial R}{\partial y}({x^{*}},y^*)=0.\end{eqnarray}  
By \eqref{wed31X}, we obtain that \eqref{uuu} is equivalent to
\begin{eqnarray*}\label{eq12}
y^*S({x^{*}},y^*) = y^* \frac{\partial S}{\partial x}({x^{*}},y^*) = S({x^{*}},y^*)+
y^*\frac{\partial S}{\partial y}({x^{*}},y^*)=0.\end{eqnarray*}  
So either $y^*= 0$ or $y^* \neq 0$ and $
S({x^{*}},y^*) = \frac{\partial S}{\partial x}({x^{*}},y^*) = 
\frac{\partial S}{\partial y}({x^{*}},y^*)=0.$ \\
Observe that
\begin{eqnarray}\label{LL1}
\frac{\partial S}{\partial x}({x},y)&=&2 \alpha {x} - 6 \beta {x}^2 + 12 \gamma {x}^3 - 2 \beta {y}^2 + 4 \gamma {x} {y}^2, \\\label{xc}
\frac{\partial S}{\partial y}({x},y)&=&2 \alpha y - 4 \beta xy + 4 x^2y  -4\gamma {y}^3.
\end{eqnarray}
Solving \eqref{LL1} and \eqref{xc} and using the fact that $\gamma \neq 0$, yields $p=(x^{*},y^*)$ as a singular point of 
 $S$  if and only if either
\begin{eqnarray*}
({x^{*}},y^*)=\left( \frac{\beta}{4 \gamma},\pm \frac{\sqrt{-3 \beta ^2 + 8\alpha \gamma}}{4\gamma}\right)~~\mbox{and}~~ 64 \gamma^3 +(4\alpha\gamma-\beta^2)^2=0
\end{eqnarray*}
or
\begin{eqnarray*}({x^{*}},y^*)&=&\left(\frac{3 \beta \pm \sqrt{3} \sqrt{3 \beta^2-8\alpha \gamma}}{12\gamma}, 0\right)~~\mbox{and}  \end{eqnarray*}
\begin{eqnarray*}
-192 \gamma^3(3\alpha^4 \gamma-\alpha^3 \beta^2-72 \alpha^2 \gamma^2+108 \alpha \beta^2 \gamma-27 \beta^4+432 \gamma^3)= -192 \gamma^3 \bigtriangleup(G)=0.
 \end{eqnarray*}
\end{proof}

\begin{lemma}\label{SH}
Let  $b(z), G(z)$and $H(z)$ be  defined in \eqref{shamim22}, \eqref{Flav1x} and \eqref{Auntkaduka} respectively. Further, assume that $\gamma <0$ and $\bigtriangleup(G)<0$.
Then $b^{-1}(\mathbb{R})$ contains a smooth Jordan curve with $0$ in its interior if and only if $\bigtriangleup(H)>0$ and $12\alpha \gamma-\beta^2 \geq 0$. 
\end{lemma}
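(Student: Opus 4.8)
\textbf{Proof plan for Lemma \ref{SH}.}

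The plan is to analyze the global structure of the net $\Gamma_b = b^{-1}(\mathbb{R}P^1)$ in the regime $\gamma<0$, $\bigtriangleup(G)<0$, where $G(z)$ has exactly two simple real critical points $z_1,z_4$ and one pair of complex conjugate critical points $z_\pm = c\pm i d$. First I would fix the local picture at the real critical points and at $\infty$: since $z_1,z_4$ are simple, locally $b(z)\sim z^2$ there, so through each passes the real axis and exactly one non-real arc; since $z=\infty$ is a critical point of order $2$ (as $b(z)\sim\gamma z^3$), three arcs of $\Gamma_b$ pass through $\infty$, one being $\mathbb{R}P^1$ and the other two non-real. The key point is that Lemma \ref{lem} (and Lemma \ref{Agn}) force every arc of $b^{-1}(\mathbb{R})\setminus\mathbb{R}$ to start and terminate at a critical point of $b$ — real or \emph{non-real}. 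Thus the existence of a smooth Jordan curve enclosing $0$ depends precisely on whether the non-real critical points $z_\pm$ lie on $\Gamma_b$ or not: if $z_\pm\notin\Gamma_b$, the only available endpoints for non-real arcs are $z_1,z_4,\infty$, and by the same counting/contradiction argument used in Lemma \ref{SHAM} (two non-real arcs run from $\infty$ to $z_1$ and $z_4$, while the remaining non-real arc from $z_1$ closes through $z_4$), one gets a smooth Jordan curve around $0$; if instead $z_\pm\in\Gamma_b$, that point is a singular (double) point of the curve $\mathcal C:\,S(x,y)=0$, so the curve through the origin is forced to pass through $z_+$ (and $z_-$), destroying simplicity.

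So the heart of the proof is the translation ``$z_\pm\notin\Gamma_b$'' $\iff$ ``$\bigtriangleup(H)>0$ and $12\alpha\gamma-\beta^2\ge0$.'' For this I would invoke Lemma \ref{Kiwede}: $p=(x^*,y^*)$ is a singular point of $\mathcal C$ exactly in the two listed cases. Under the running hypothesis $\bigtriangleup(G)<0$, the second alternative of Lemma \ref{Kiwede} ($3\beta^2-8\alpha\gamma\ge0$ together with $\bigtriangleup(G)=0$) is vacuous, so the only way a non-real singular point can occur is $3\beta^2-8\alpha\gamma\le0$ \emph{and} $64\gamma^3+(4\alpha\gamma-\beta^2)^2=0$, with the singular point at $\left(\frac{\beta}{4\gamma},\pm\frac{\sqrt{8\alpha\gamma-3\beta^2}}{4\gamma}\right)$. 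I would then show:
\begin{itemize}
\end{itemize}
(i) the condition $12\alpha\gamma-\beta^2\ge0$ is exactly what guarantees the critical points $z_\pm$ have the right configuration relative to the real critical points $z_1,z_4$ — concretely, one checks via the factorization of $G$ (equivalently of $J(u)=u^4+\alpha u^2-2\beta u+3\gamma$) that $12\alpha\gamma-\beta^2\ge0$ pins down that the two real critical points straddle $0$ and that $\Re z_\pm$ does not separate them, i.e. there is ``room'' for the Jordan curve; and (ii) $\bigtriangleup(H)>0$, where $H(z)=z^3+2^{4/3}\gamma z+\frac{4\alpha\gamma-\beta^2}{3\sqrt3}$, is the precise numerical condition equivalent to $64\gamma^3+(4\alpha\gamma-\beta^2)^2>0$ (up to a positive constant, $\bigtriangleup(H)=-4(2^{4/3}\gamma)^3-27\mu^2$ is a positive multiple of $-\!\left(64\gamma^3+(4\alpha\gamma-\beta^2)^2\right)$ — I'd double-check the sign bookkeeping here), hence $\bigtriangleup(H)>0$ says exactly that $z_\pm$ is \emph{not} a singular point of $\mathcal C$, i.e. $z_\pm\notin\Gamma_b$. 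Combining (i) and (ii) with the net analysis of the first paragraph gives both directions.

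For the forward direction I would argue contrapositively: if $\bigtriangleup(H)\le0$ then $64\gamma^3+(4\alpha\gamma-\beta^2)^2\le0$; if it is $<0$ this contradicts squares being involved unless $3\beta^2-8\alpha\gamma\le0$ fails — so actually $=0$ occurs, $z_\pm\in\Gamma_b$ is a double point, and by Lemma \ref{kalu} the topology of $b^{-1}(\mathbb R)$ near these parameters is that of a curve with a self-crossing at $z_\pm$, so no \emph{smooth} Jordan curve; and if $12\alpha\gamma-\beta^2<0$ then the real critical points fail to straddle $0$ in the required way and the closed component of $\Gamma_b$ either does not enclose $0$ or is non-simple (here I would draw on Remark \ref{RMK}-type reasoning and a continuity/degeneration argument via Lemma \ref{kalu}). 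Conversely, when both inequalities hold, $z_\pm\notin\Gamma_b$ and the counting argument produces the smooth Jordan curve. \textbf{The main obstacle} I anticipate is part (i): cleanly proving that $12\alpha\gamma-\beta^2\ge0$ is both necessary and sufficient for the real critical points and $\Re z_\pm$ to be positioned so that the unique non-real closed arc actually encircles $0$ rather than, say, looping on one side — this requires a careful sign/location analysis of the roots of $J(u)$ (or of $G$) and of $c=\Re z_\pm$, plus ruling out the non-simple alternatives of Remark \ref{RMK} by a deformation argument. The local-structure and Lemma \ref{Kiwede}/discriminant bookkeeping in (ii) should be routine once the sign of $\bigtriangleup(H)$ versus $64\gamma^3+(4\alpha\gamma-\beta^2)^2$ is nailed down.
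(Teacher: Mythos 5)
Your reduction of the problem to Lemma \ref{Kiwede} is in the spirit of the paper, but the load-bearing claim of your first paragraph — that once the non-real critical points $z_\pm$ do not lie on $\Gamma_b$, the counting argument of Lemma \ref{SHAM} forces a smooth Jordan curve around $0$ — is false, and this is precisely where the condition $12\alpha\gamma-\beta^2\ge 0$ has to do real work. With two simple real critical points $z_1,z_4$ and a critical point of order $2$ at $\infty$, the non-real arc-ends in the upper half-plane are: one at $z_1$, one at $z_4$, and two at $\infty$; Lemmas \ref{lem} and \ref{Agn} exclude an arc from a point back to itself, but they leave \emph{two} admissible pairings, namely $\{z_1\!-\!z_4,\ \infty\!-\!\infty\}$ (which gives a Jordan curve through $z_1,z_4$ enclosing $0$) and $\{z_1\!-\!\infty,\ z_4\!-\!\infty\}$ (which gives no bounded loop around $0$). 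Both actually occur while $\mathcal C$ is non-singular: in the paper's notation these are the components $\Omega_1$ and $\Omega_3$, both of which satisfy $\bigtriangleup(H)>0$, and only $\Omega_1$ carries the Jordan curve. So non-singularity of $\mathcal C$ is not equivalent to the existence of the Jordan curve, and your proposed equivalence ``$z_\pm\notin\Gamma_b$ iff $\bigtriangleup(H)>0$ and $12\alpha\gamma-\beta^2\ge0$'' cannot be right as stated (the left side does not see the second inequality at all). You flag the needed configuration analysis as your ``main obstacle'' and defer it; that deferred step is the theorem.

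The paper closes exactly this gap by a different mechanism: it never decides the arc-pairing by local analysis. Instead it observes that $\bigtriangleup(G)$ and $\bigtriangleup(H)$ are weighted homogeneous with the same weights, restricts to $\beta=1$, passes to coordinates $(\gamma,v)$ with $v=\alpha\gamma-\beta^2/4$, shows that $\Omega\setminus\{\bigtriangleup(H)=0\}$ has exactly three connected components $\Omega_1,\Omega_2,\Omega_3$, and then invokes Lemma \ref{kalu}: since the topology of $\mathcal C$ is constant on each component, it suffices to inspect one representative point per component. The inequality $12\alpha\gamma-\beta^2\ge0$ then appears only as the algebraic description of which of the two non-singular components you are in, not as a statement about the relative position of $\Re z_\pm$ and $z_1,z_4$. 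Your necessity argument has a related flaw: when $\bigtriangleup(H)<0$ the curve $\mathcal C$ is still non-singular (singularity occurs only on $\bigtriangleup(H)=0$), and $64\gamma^3+(4\alpha\gamma-\beta^2)^2<0$ is perfectly possible for $\gamma<0$ sufficiently negative, so your claim that the case ``$<0$'' collapses to ``$=0$'' does not hold; the correct conclusion there is simply that you are in $\Omega_2$, whose representative point has no Jordan loop about $0$. If you want to keep your direct net-analytic route, you must supply an argument that actually selects the arc-pairing as a function of $(\alpha,\beta,\gamma)$ — for instance by a monotonicity or degeneration argument across the wall $\bigtriangleup(H)=0$ — and that is substantially harder than the discriminant-complement bookkeeping.
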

\begin{proof}
 By Theorem \ref{lovence}, the condition $\bigtriangleup(G)<0$ is equivalent to  $G(z)$ having two  real zeros and a pair of complex conjugate zeros. Moreover, for $\gamma<0$, the real zeros of $G(z)$ have different signs by Remark \ref{HHABxxB}. \\
Consider a domain  $\Omega$ in $\mathbb{R}^3$ given by
$$\Omega=\{ (\alpha, \beta, \gamma)\in \mathbb{R}^3: \gamma<0 ~~\mbox{and} \bigtriangleup(G)<0 \}.$$
Define  $S(x,y)$ by \eqref{wed312}, and let $\mathcal{C}$ be the curve given by $S(x,y)=0.$

By Lemma \ref{Kiwede}, it follows that $\mathcal{C}$ in $\Omega$ is non-singular  if and only if \begin{eqnarray*}
 \bigtriangleup(H)=64 \gamma^3 +(4\alpha\gamma-\beta^2)^2=0.
\end{eqnarray*} 
We seek to understand  how $\Omega$ is splitted by the surface $\bigtriangleup(H)=0.$ Since $\bigtriangleup(G)$ and $\bigtriangleup(H)$ are weighted homogeneous polynomials with the same weights in $ (\alpha, \beta, \gamma)$ given by $w(\alpha)=2, w(\beta)=3$ and  $w(\gamma)=4$, in order to understand the topology of  $\Omega\setminus \{ \bigtriangleup(H)=0\}$, it suffices to consider the restriction of $\Omega$ and $ \bigtriangleup(H)=0$ to some affine plane and take a cone over this restriction. In our situation, it is convenient to take $\beta=1$.\\
By abuse of notation, we refer to the restriction of $\Omega,\bigtriangleup(H)$ and $\bigtriangleup(G)$  to $\beta=1$ also by $\Omega,\bigtriangleup(H)$ and $\bigtriangleup(G)$.
Using change of variables, set $v=\alpha\gamma-\frac{\beta^2}{4}$ (so that $v=\alpha\gamma-\frac{1}{4}$ in the plane $\beta=1$). The map $(\gamma,\alpha )\mapsto(\gamma, \alpha\gamma-\frac{\beta^2}{4})$ is a global diffeomorphism of 
$\mathbb{R}_{<0}\times \mathbb{R}$ to $\mathbb{R}_{<0}\times \mathbb{R}$. 
Therefore the topology of $\mathcal{C}$ and its complement in $\mathbb{R}_{<0}\times \mathbb{R}$ is the same in both $\gamma\alpha$- and  $\gamma v$-planes.\\
 Thus  $\bigtriangleup(H)=0$ and $\bigtriangleup(G)=0$  become $4\gamma^3+v^2=0$ and
\begin{eqnarray*}\label{barbie}
1+96(12\gamma^3-v^2)-768(-12\gamma^3+v^2)^2-512v(36\gamma^3+v^2)=0 \end{eqnarray*} 
respectively, (see Fig. \ref{foc4}).
Observe that $\bigtriangleup(G)>0$ only in $\Omega_4$, thus $\Omega$ consists precisely of 3 connected components namely $\Omega_1, \Omega_2$ and $\Omega_3$ with $\bigtriangleup(H)>0, \bigtriangleup(H)<0$ and $\bigtriangleup(H)>0$ respectively.
\begin{figure} [hbt!]
   \centering
     \includegraphics[height=5cm, width=8cm]{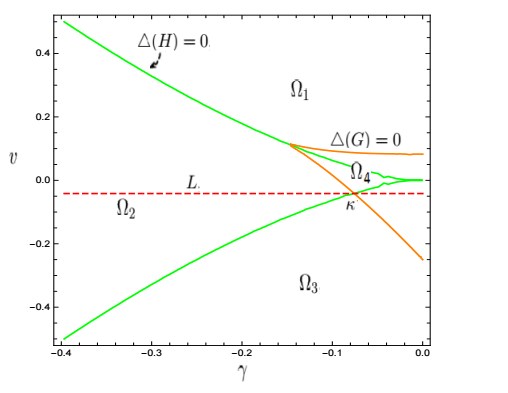}
 \caption{ }\label{foc4}
 \end{figure}
 
\textit{Let $L$ be the line in $\Omega$ given by  $v=-\frac{1}{24}$. Then $L$ lies neither  in $\Omega_1$ nor in $\Omega_3$ and can be used to distinguish between  $\Omega_1$ and $\Omega_3$ as follows:}\\
Let $\gamma \in I_1$ where $I_1= (-\infty, c)$ with $c=-\frac{1}{4(\sqrt[3]{6})^2}$. Then $\bigtriangleup(H)$ and $\bigtriangleup(G)$ vanish at $c$.
Note that  $\bigtriangleup(H)$ subject to $L$ gives a cubic polynomial $q(\gamma)=4 \gamma^3+\frac{1}{576}$. For all $\gamma\in I_1$, we have that $q(\gamma)<0$ and hence  $L$  is entirely contained in  $\Omega_2$.\\
Next, we consider the interval $I_2= (c, 0)$. Then $\bigtriangleup(G)$ subject to $L$ gives $\widetilde{q}(\gamma)=-110592\gamma^6+1952 \gamma^3+\frac{125}{144}$. For all $\gamma\in I_2$, we obtain $\widetilde{q}(\gamma)>0$ and hence  $L$  is entirely contained in  $\Omega_4$. Hence  $\Omega_1$ lies in the subdomain of $\Omega$ where $L\geq 0$  while   $\Omega_3$ lies in the subdomain of $\Omega$ where $L<0$. 

Therefore we can explicitly define $\Omega_1,\Omega_2$ and $\Omega_3$ as follows:
\begin{eqnarray*} 
\Omega_1&=& \Big\{(\gamma, v) \in \Omega : \bigtriangleup(H)>0~~\mbox{and}~~L\geq 0 \Big\},\\
\Omega_2&=& \Big\{(\gamma, v) \in \Omega : \bigtriangleup(G)<0 \Big\},\\
\Omega_3&=& \Big\{(\gamma, v) \in \Omega : \bigtriangleup(H)>0~~\mbox{and}~~L<0\Big\}.  
\end{eqnarray*}
By Lemma \ref{kalu}, it suffices to pick an arbitrary point in each of $\Omega_1, \Omega_2, \Omega_3$, and then check the topology of $\mathcal{C}$ (see Figures \ref{foc7},\ref{foc8} and  \ref{foc9a} respectively using Mathematica). 
\begin{figure}[!htb]
   \begin{minipage}{0.3\textwidth}
     \centering
     \includegraphics[height=4cm, width=4cm]{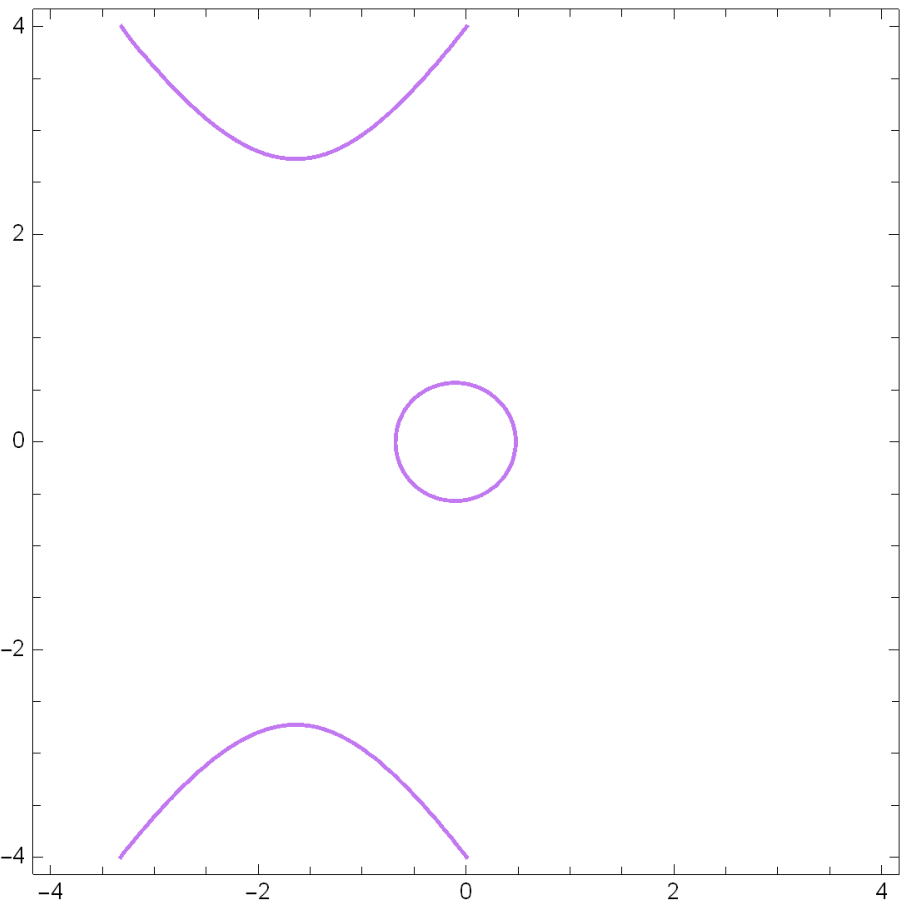}
  \caption{}\label{foc7}
   \end{minipage}\hfill
   \begin{minipage}{0.3\textwidth}
     \centering
     \includegraphics[height=4cm, width=4cm]{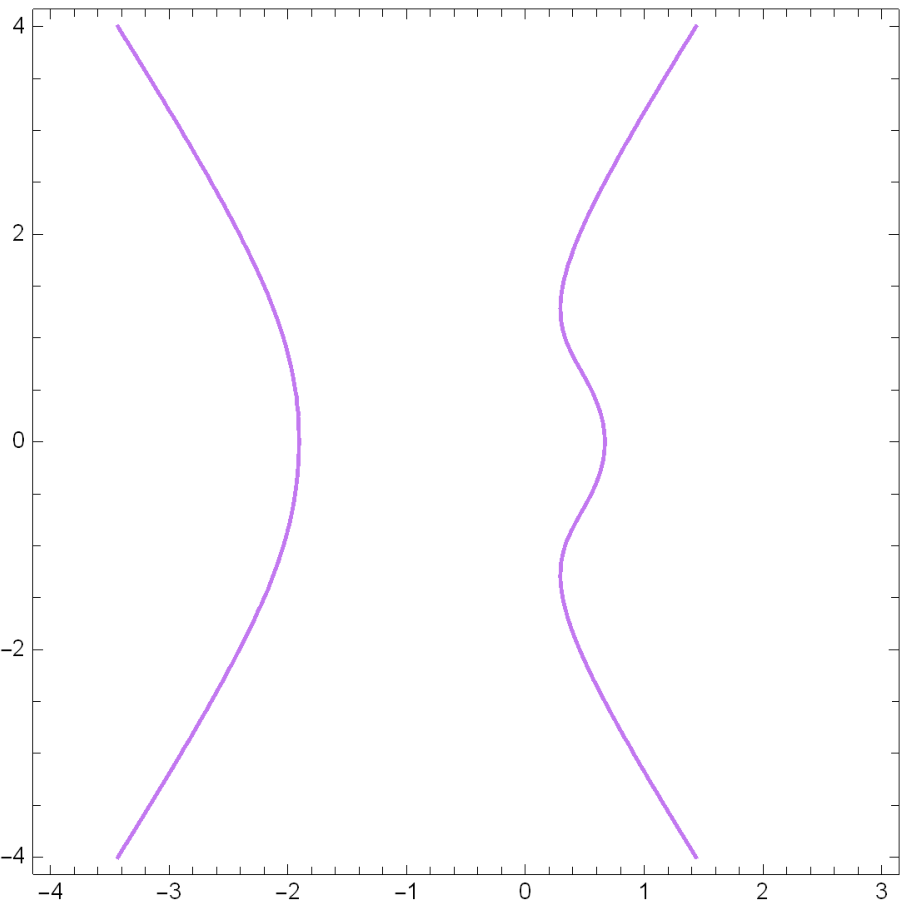}
 \caption{}\label{foc8}
   \end{minipage}\hfill
   \begin{minipage}{0.3\textwidth}
     \centering
     \includegraphics[height=4cm, width=4cm]{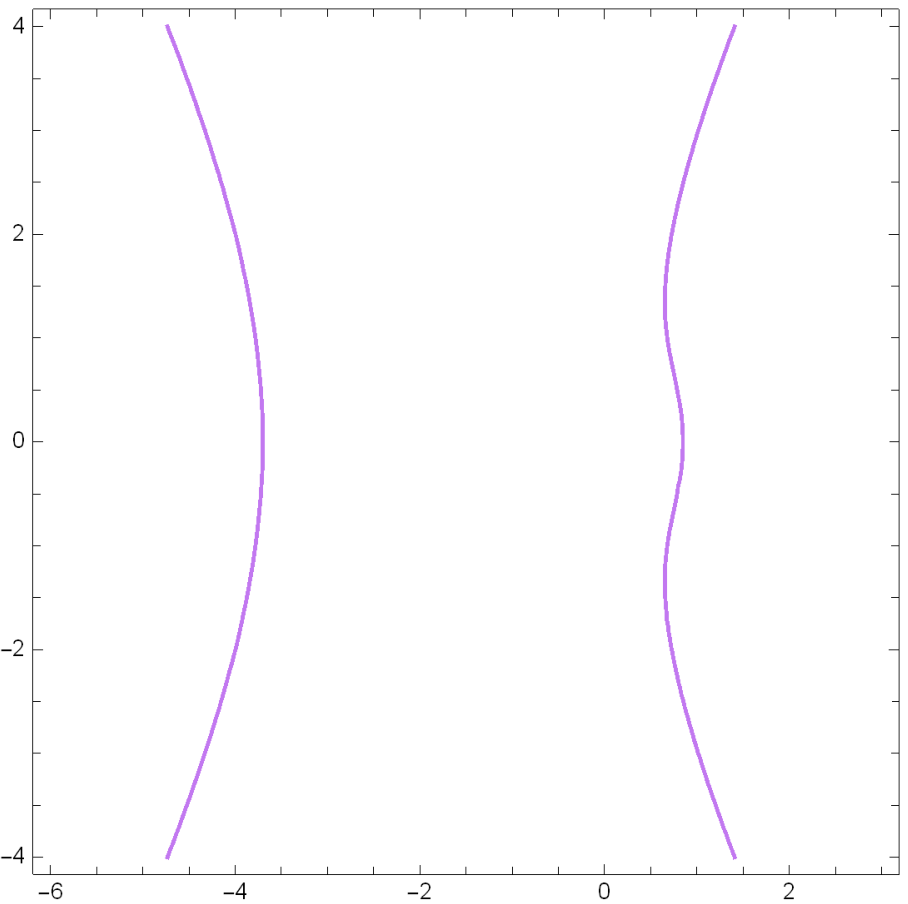}
\caption{}\label{foc9a}
   \end{minipage}
 \end{figure}
 We conclude that $\mathcal{C}$ contains  a Jordan curve with $0$ in the interior if and only if $(\gamma, v)\in \Omega_1$ corresponding to $(\alpha,1,\gamma) \in \Omega$ under the above change of variables. For an  arbitrary $\beta=m$, we define $L$ by  $v=-\frac{1}{24}\beta^2$ which is equivalent to $12\alpha\gamma-3m=0$ which proves the Lemma. 
 \end{proof}
 
 \begin{remark}
As a set, $\mathcal{C}$ is $b^{-1}(\mathbb{R})$ with the real axis removed. Therefore 
a Jordan curve found in  $\mathcal{C}$ is precisely the same curve in $b^{-1}(\mathbb{R})$.
\end{remark}
\begin{figure}[!htb]
   \begin{minipage}{0.33\textwidth}
     \centering
     \includegraphics[height=4cm, width=4cm]{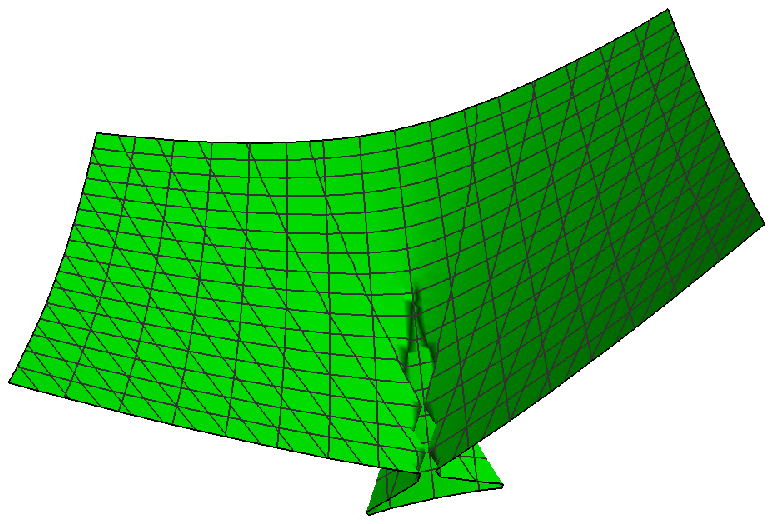}
 \caption{ }\label{foc1}
   \end{minipage}\hfill
   \begin{minipage}{0.33\textwidth}
     \centering
     \includegraphics[height=4cm, width=4cm]{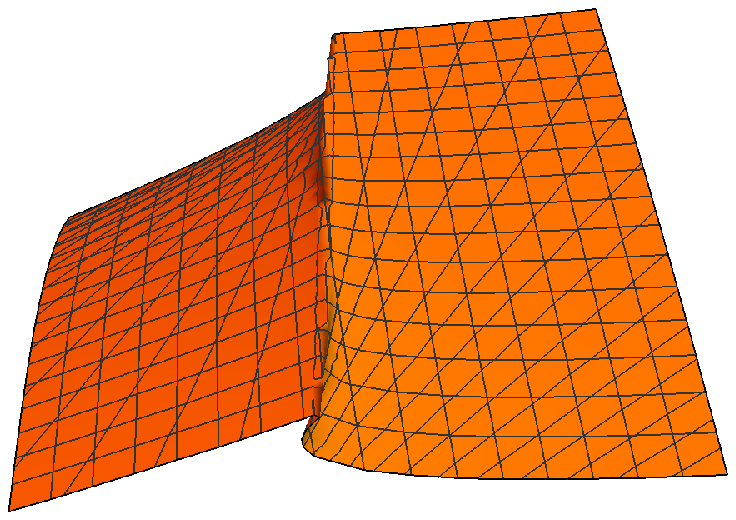}
\caption{}\label{foc2}
   \end{minipage}\hfill
   \begin{minipage}{0.3\textwidth}
     \centering
     \includegraphics[height=4cm, width=5cm]{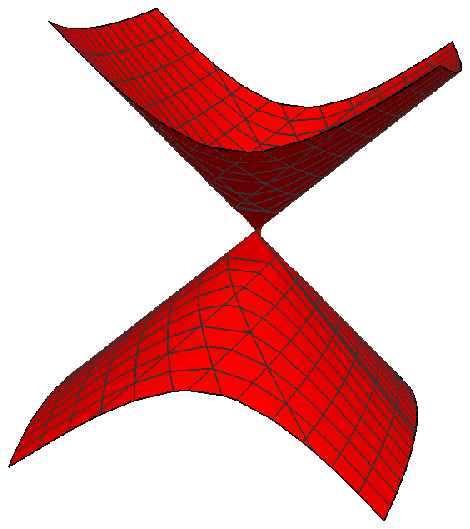}
\caption{ }\label{foc3x}
   \end{minipage}
 \end{figure}

Figure \ref{foc1} represents the zero set of the discriminant of $G(z)$ i.e., the points
$(\alpha,\beta, \gamma)\in \mathbb{R}^3$ satisfying $3\alpha^4 \gamma-\alpha^3 \beta^2-72 \alpha^2 \gamma^2+108 \alpha \beta^2 \gamma-27 \beta^4+432 \gamma^3=0.$
It is called a swallowtail in the literature on singularities and  is a surface in the 3-dimensional space of real polynomials of the form $z^4+ az^2+bz+c$, for $(a,b,c)$ corresponding to polynomials with multiple real roots. 

Figure \ref{foc2} represents the zero set of the discriminant of $H(z)$ i.e., the points $(\alpha,\beta, \gamma)\in \mathbb{R}^3$ satisfying $16 \gamma^2 \left(\alpha^2+4 \gamma\right)-8 \alpha \beta ^2 \gamma+\beta ^4=0$. This surface is the vanishing set of the discriminant  of $P_3(z)$  defined by the recurrence in Problem \ref{prob:main} whereby  $\alpha\mapsto \sqrt[3]{2} \gamma$ and $\beta \mapsto  \frac{4\alpha \gamma -\beta^2}{3\sqrt{3}}$. The surface intersects  the plane $\beta=1$ to form a cuspidal cubic (also called a semicubical parabola) whose  regular points correspond to polynomials with a double zero, and the vertex to the cubic polynomial $x^3$ which has a tripple zero.   

Figure \ref{foc3x}  represents the zero  set of the discriminant of $W(z)$ i.e., the points $(\alpha,\beta, \gamma)\in \mathbb{R}^3$ satisfying $12\alpha\gamma -3\beta^2=0$. It is a quadric surface and is the discriminant set of the quadratic polynomial
$W(z)=\alpha z^2+\beta z+3\gamma$.
\medskip

We now settle Theorem $2(b)$.

\medskip

\textit{Proof of Theorem 2 (b)}: Suppose that 
\begin{eqnarray}\label{cccc}
 \gamma <0,~~~\bigtriangleup(G)<0,~~\bigtriangleup(W)\leq 0 ~~\mbox{and}~~\bigtriangleup(H)> 0. \end{eqnarray}
By Lemma \ref{SH}, $b^{-1}(\mathbb{R})$ contains a smooth Jordan curve with $0$ in its interior. Hence Theorem $1(ii)$ is satisfied. By the equivalence $(ii)\Rightarrow (iii)$ of 
Theorem $1$, we deduce  that all the zeros of $P_n(z)$ are real for all positive integer $n$. 

Suppose instead of $ \bigtriangleup(H)>0$ we have $ \bigtriangleup(H)=0$ in \eqref{cccc}. Then 
 we obtain a limiting case where $b^{-1}(\mathbb{R})$ contains a non-smooth curve enclosing $0$. Since for each positive integer $n$, all the zeros of $P_n(z)$ are real in \eqref{cccc}, it follows that the zeros will remain  real in the limit when $ \bigtriangleup(H)=0$. (Notice that at least one of these real-rooted polynomials must have a multiple real zero). This completes the proof.

\section{Examples}\label{BIII}\label{sect6}
In this section, we present several concrete examples using numerical experiments.
\begin{example}
\end{example}\noindent  
We illustrate the case  Theorem 2 ($a$) where $G(z)$ is hyperbolic, and plot the zeros of some polynomials $P_n(z)$ in the sequence $\{P_n(z) \}_{n=1}^{n=\infty}$.
For example, we choose $(\alpha, \beta, \gamma)=(-\frac{27}{4}, -\frac{7}{8}, \frac{5}{2}), (-11, 9, -\frac{8}{3}),  (-\frac{26}{10}, 1, -\frac{1}{10}), (-2, \frac{2}{10}, \frac{2}{10}), (-2, 0, \frac{1}{3}),$ and $(-6,-4,-1)$ corresponding to $G(z)$ having :- distinct roots, (see Fig. \ref{fig32}); one double real root and two simple zeros  in which the multiple zero has a larger absolute value than the simple zero of the same sign, (see Fig. \ref{fig42}); one double real root and two simple zeros  in which the multiple zero is smaller in absolute value than the simple zero of the same sign (see Fig. \ref{fig52}); one double real root and two simple zeros  in which the two simple zeros have the same sign, (see Fig. \ref{fig62}); two pairs of equal real zeros, (see Fig. \ref{fig72}) and  all the zeros are equal in which three are equal, (see Fig. \ref{fig82}) respectively. We note that same parameters were used to obtain  Figures $\ref{fig3}- \ref{fig8}$ in that order. In Figures $\ref{fig32}- \ref{fig82}$, we show a portion of the curve $b^{-1}(\mathbb{R})$ and the zeros of $P_{150}(z)$.
\begin{figure}[!htb]
   \begin{minipage}{0.3\textwidth}
     \centering
     \includegraphics[height=4cm, width=4cm]{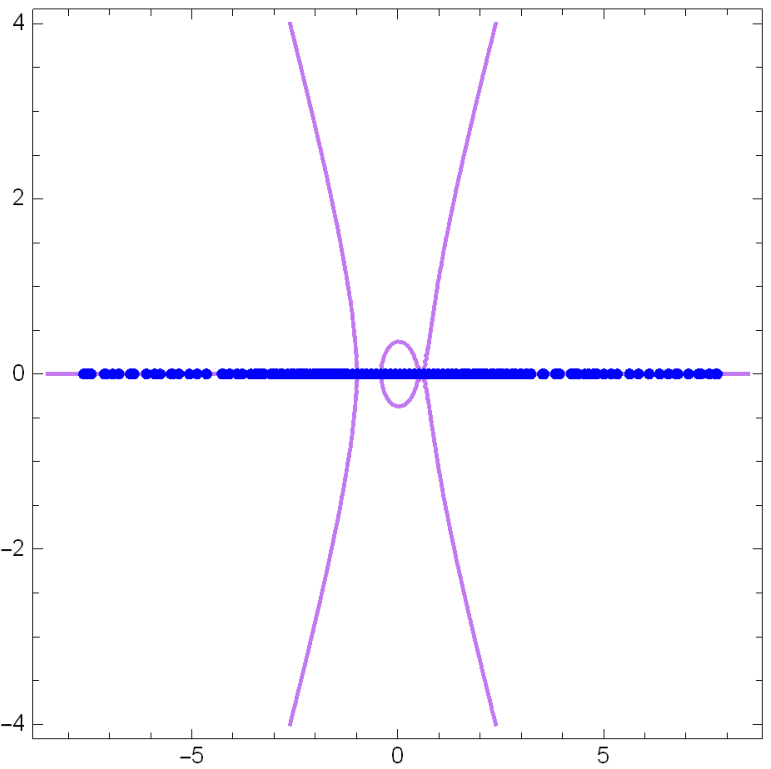}
     \caption{}\label{fig32}
   \end{minipage}\hfill
   \begin{minipage}{0.3\textwidth}
    \centering
     \includegraphics[height=4cm, width=4cm]{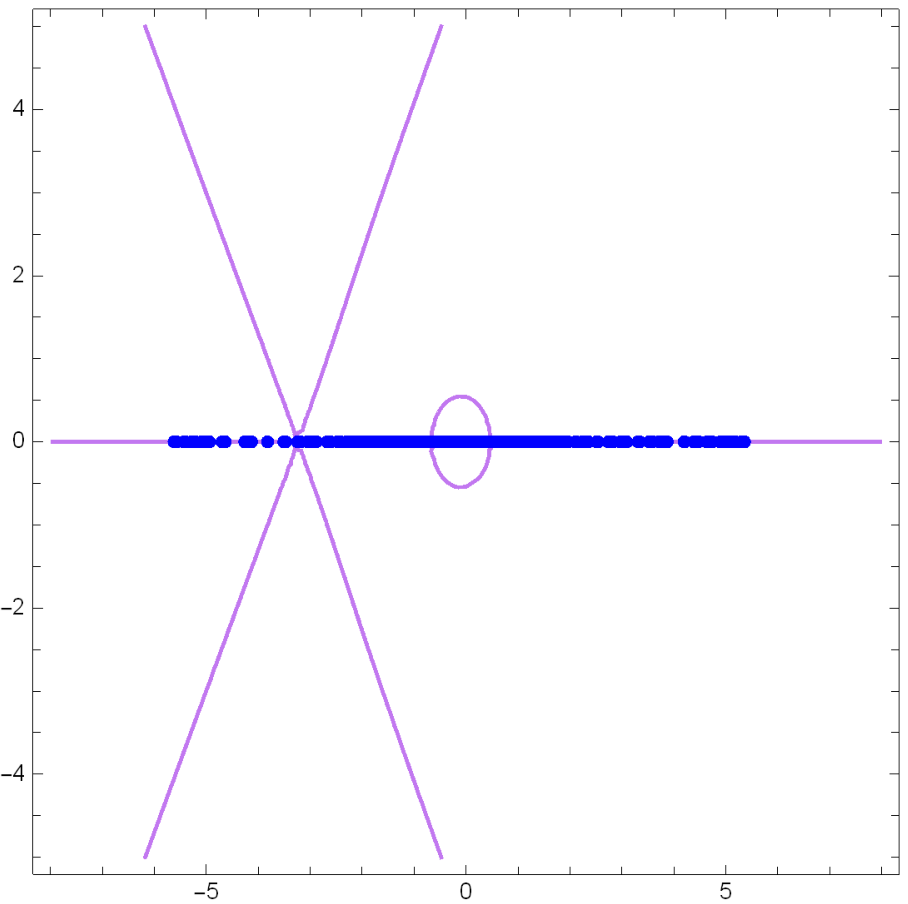}
    \caption{}\label{fig42}
   \end{minipage}\hfill
   \begin{minipage}{0.3\textwidth} 
     \centering
     \includegraphics[height=4cm, width=4cm]{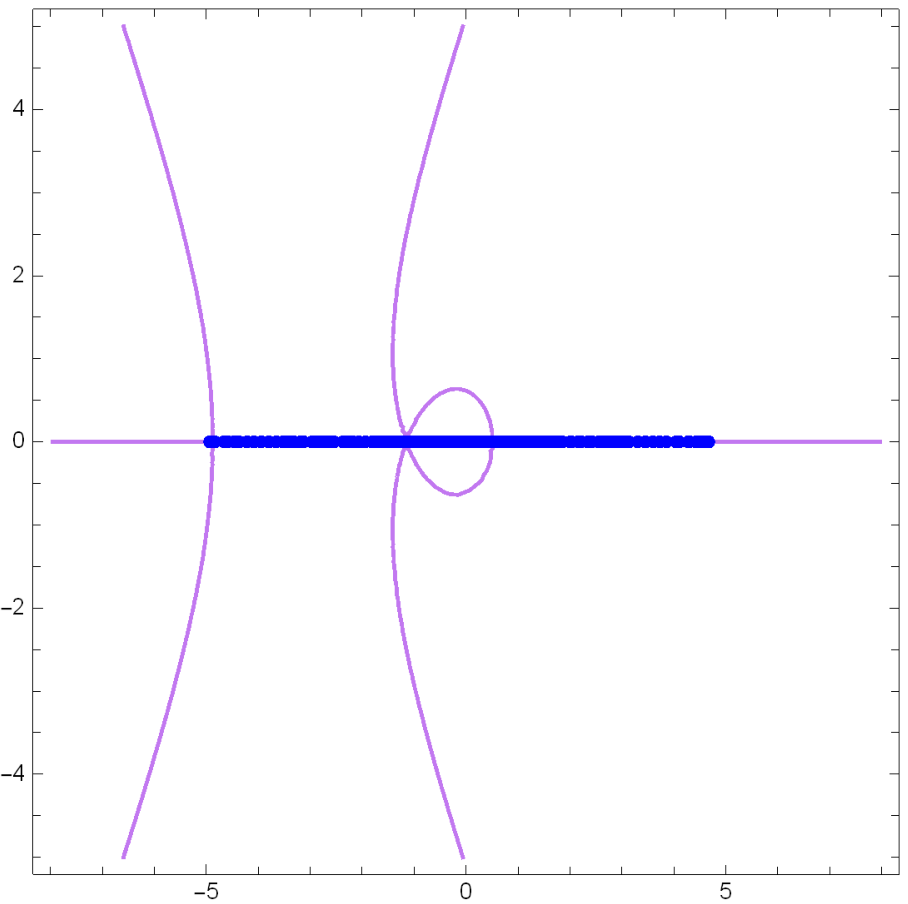}
     \caption{}\label{fig52}
   \end{minipage}
 \end{figure}
\medskip
  \begin{figure}[!htb]
   \begin{minipage}{0.3\textwidth}
     \centering
     \includegraphics[height=4cm, width=4cm]{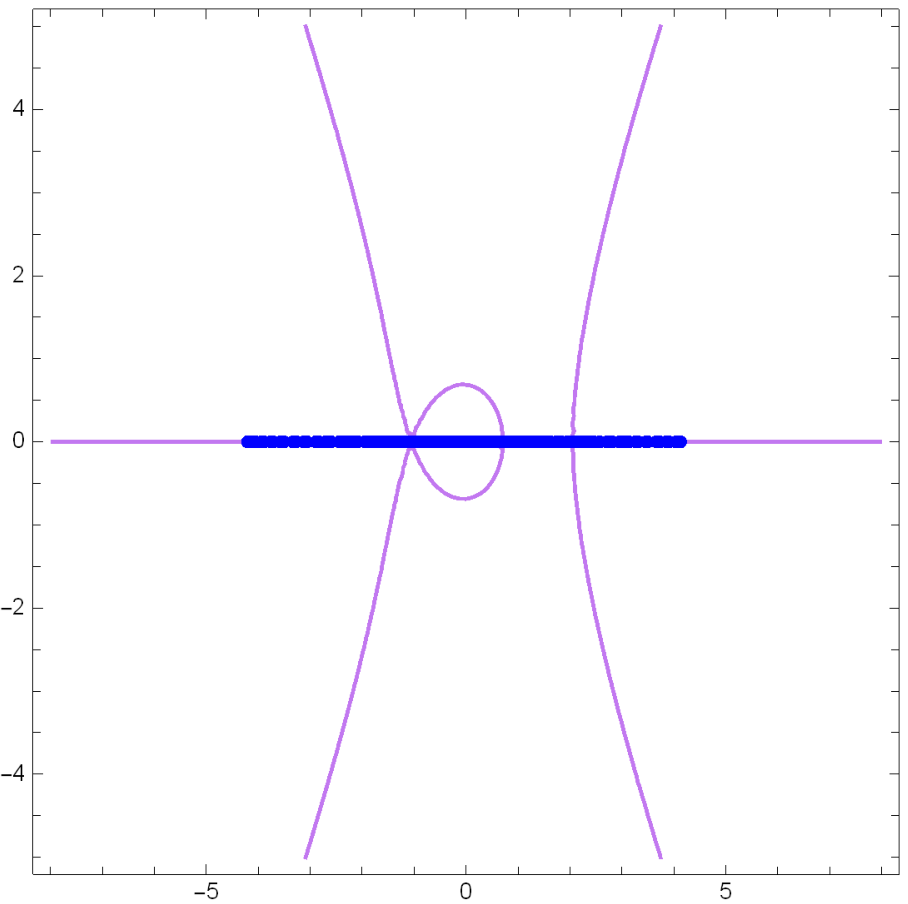}
     \caption{}\label{fig62}
   \end{minipage}\hfill
   \begin{minipage}{0.3\textwidth}
    \centering
     \includegraphics[height=4cm, width=4cm]{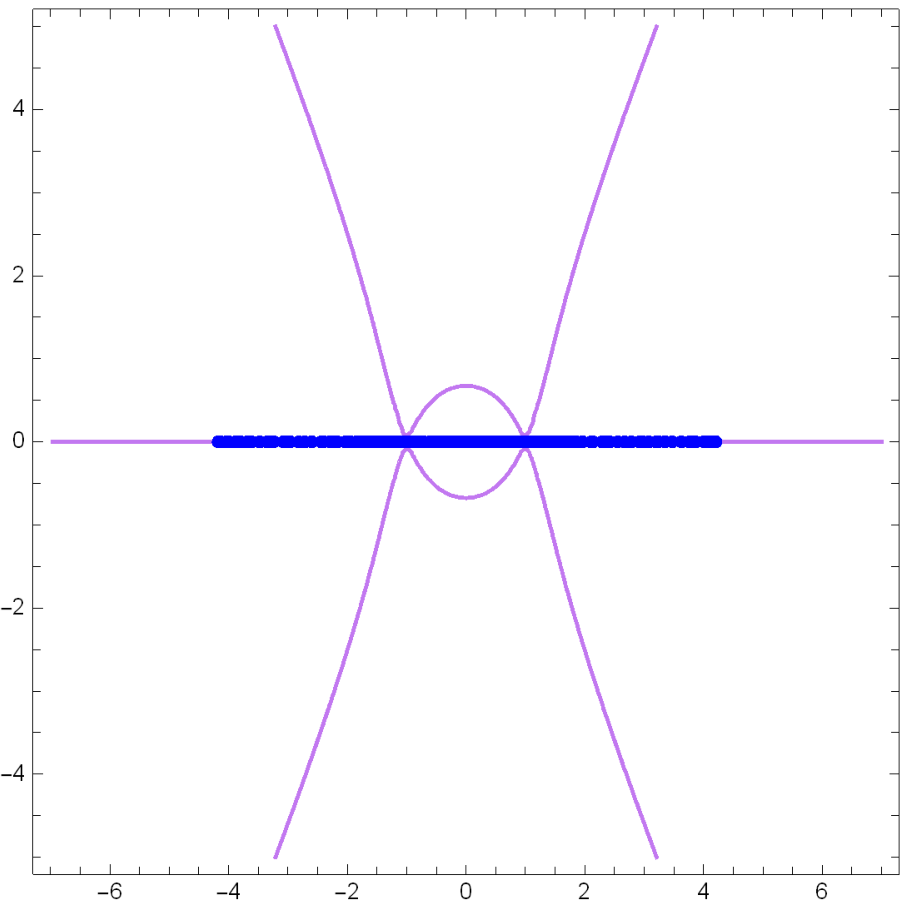}
    \caption{}\label{fig72}
   \end{minipage}\hfill
   \begin{minipage}{0.3\textwidth}
     \centering
     \includegraphics[height=4cm, width=4cm]{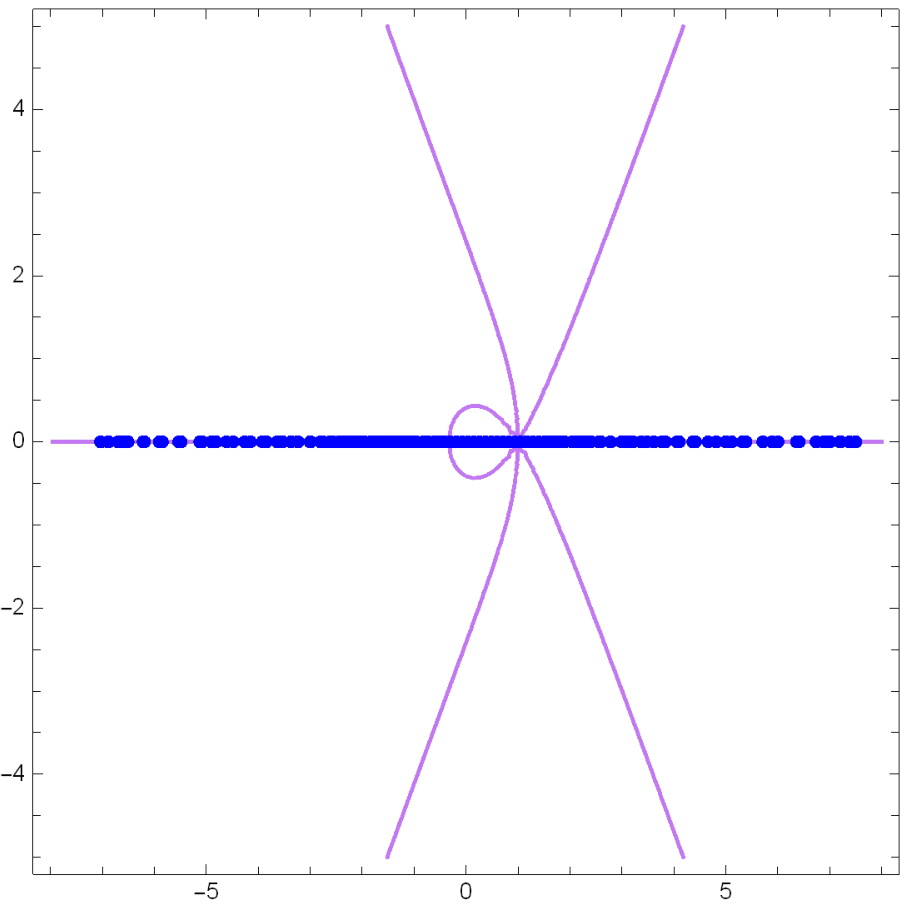}
     \caption{}\label{fig82}
   \end{minipage}
 \end{figure}
\begin{example} 
\end{example} \noindent 
We illustrate the case Theorem 2 ($b$). We use  $(\alpha, \beta, \gamma)= (-\frac{13}{4}, 1, -\frac{1}{5}), (-\frac{9}{20}, 1, -\frac{1}{3})$ and $(\frac{3}{4}, 1, -\frac{2}{10})$ respectively. These are the same parameters used to obtain Figures $\ref{foc7}- \ref{foc9a}$ in that order. In Figures $\ref{cap1}- \ref{cap3}$, we show  the portion of the curve $b^{-1}(\mathbb{R})$ and the zeros of $P_{70}(z)$.
\begin{figure}[!htb]
   \begin{minipage}{0.3\textwidth}
     \centering
     \includegraphics[height=4cm, width=4cm]{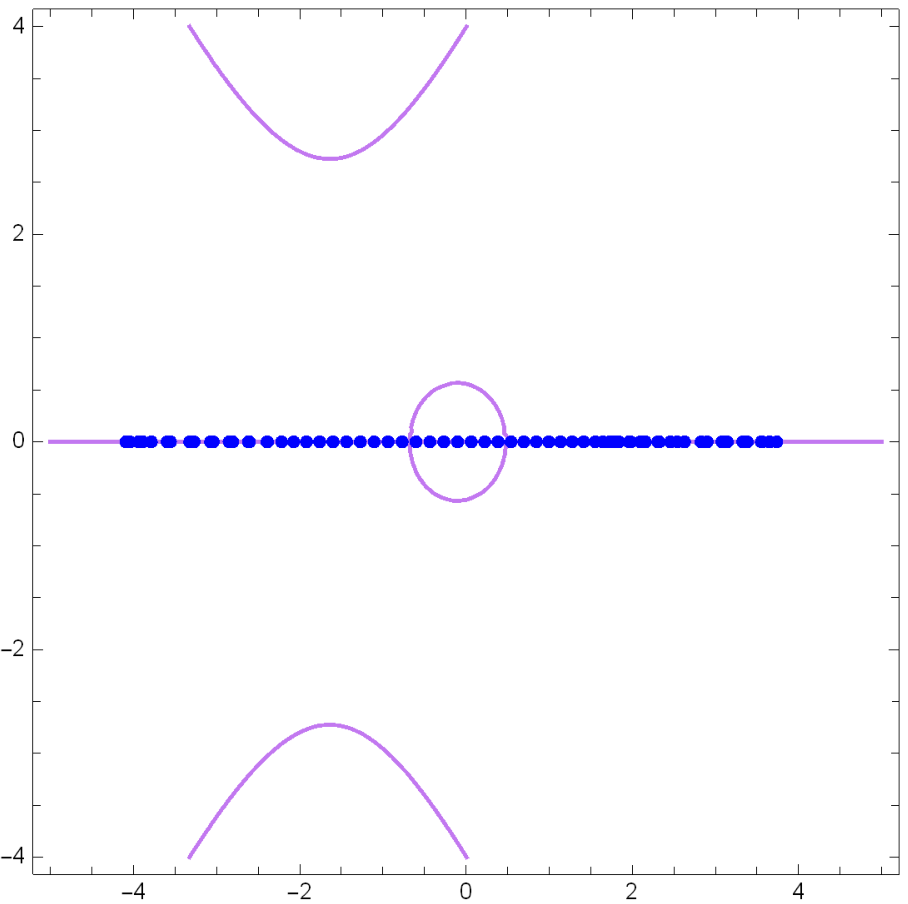}
  \caption{}\label{cap1}
   \end{minipage}\hfill
   \begin{minipage}{0.3\textwidth}
     \centering
     \includegraphics[height=4cm, width=4cm]{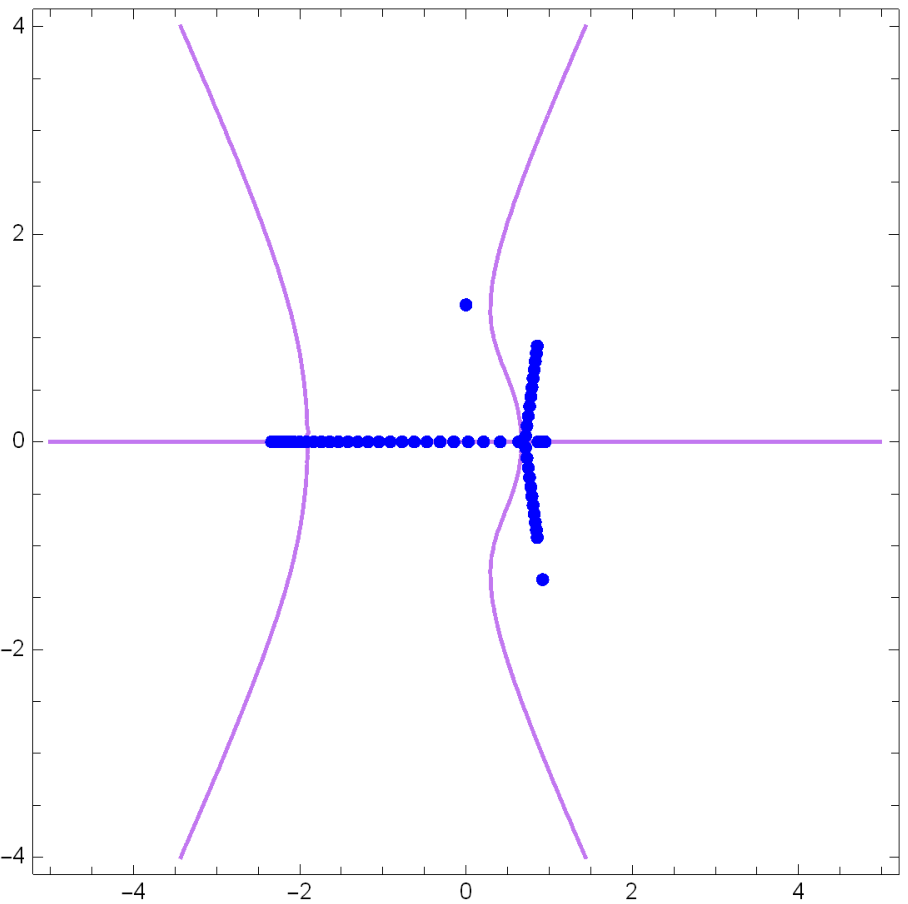}
 \caption{}\label{cap2}
   \end{minipage}\hfill
   \begin{minipage}{0.3\textwidth}
     \centering
     \includegraphics[height=4cm, width=4cm]{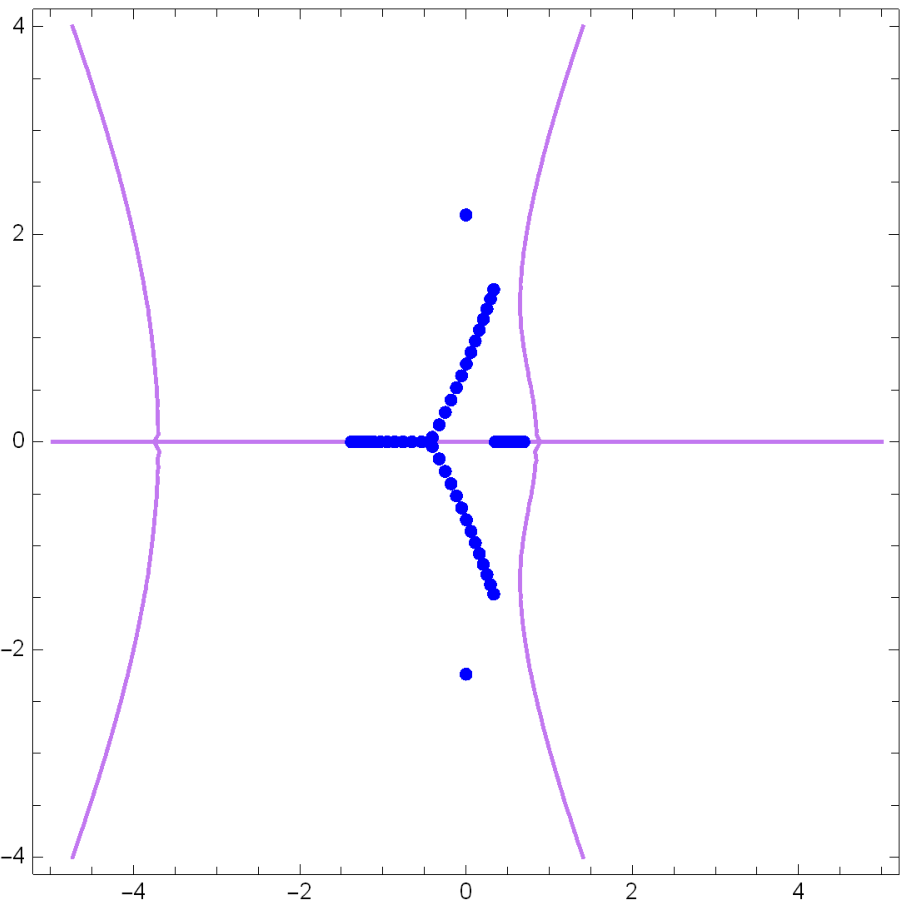}
\caption{}\label{cap3}
   \end{minipage}
 \end{figure}
 
 \begin{example}
\end{example}\noindent  
Fix a pair $(\alpha, \beta)=(-5,4)$ and vary the parameter $\gamma \in \mathbb{R}$ thus getting the symbol
$$ b(z)=-\frac{1}{z} -5z- 4z^2 + \gamma z^3.$$
Consider  $\gamma=-13,-2, -0.7$. Then for any of these choices of $\gamma$, the polynomial $G(z)$ has two real roots (of opposite signs) and a pair of complex conjugate roots. 
In Figures $\ref{Ex1}- \ref{Ex3}$, we show the portion of the curve $b^{-1}(\mathbb{R})$ and the zeros of $P_{30}(z)$ for $\gamma=-13,-2, -0.7$ respectively.
\begin{figure}[!htb]
   \begin{minipage}{0.3\textwidth}
     \centering
     \includegraphics[height=4cm, width=4cm]{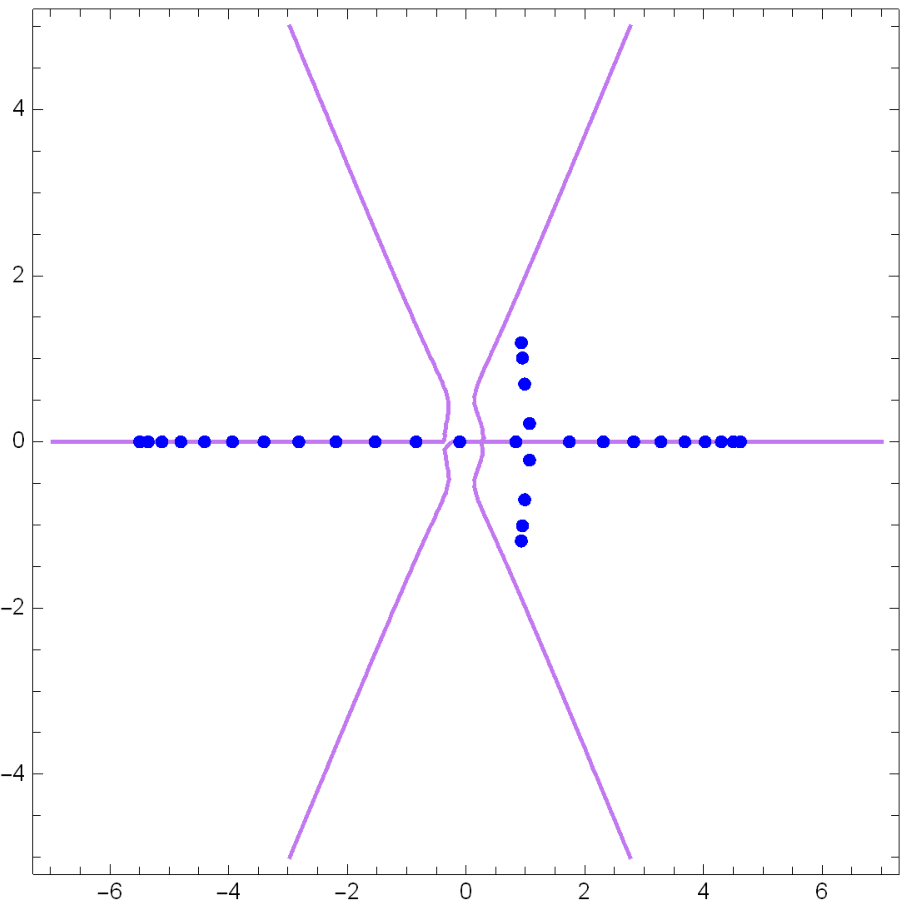}
  \caption{}\label{Ex1}
   \end{minipage}\hfill
   \begin{minipage}{0.3\textwidth}
     \centering
     \includegraphics[height=4cm, width=4cm]{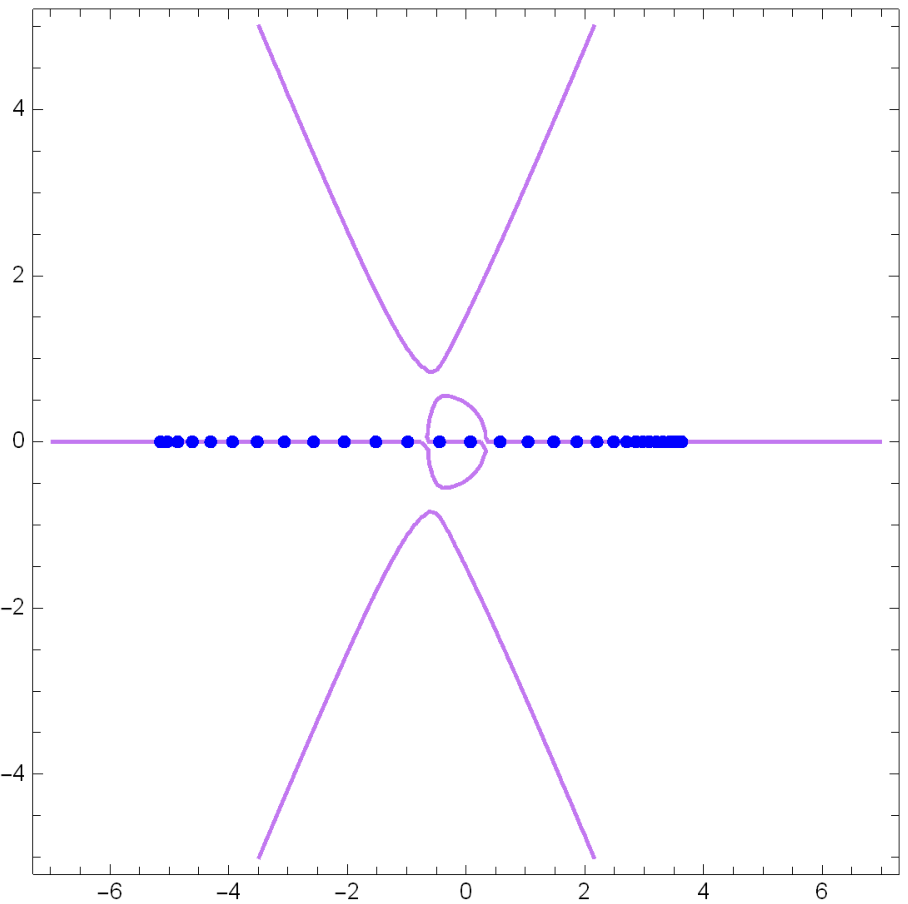}
 \caption{}\label{Ex2}
   \end{minipage}\hfill
   \begin{minipage}{0.3\textwidth}
     \centering
     \includegraphics[height=4cm, width=4cm]{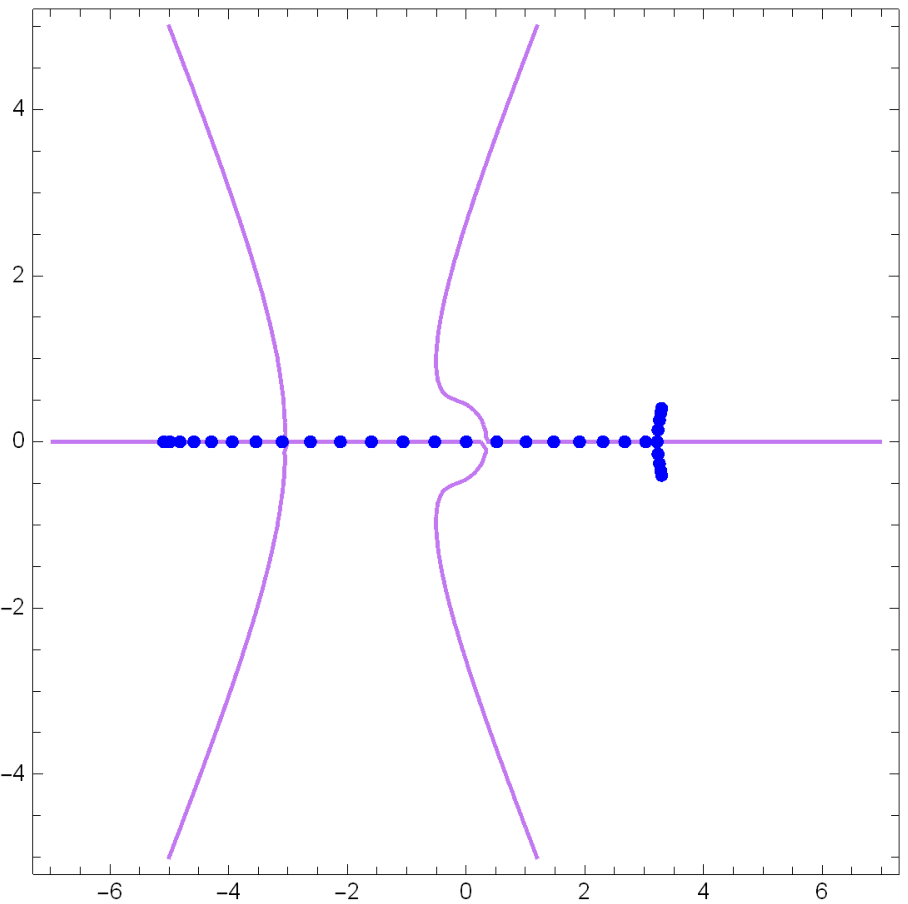}
\caption{}\label{Ex3}
   \end{minipage}
 \end{figure}

\begin{remark}
From our plots, we observe that whenever  $b^{-1}(\mathbb{R})$ contains a smooth Jordan curve with $0$ in its interior, $P_{n}(z)$ is real-rooted. This holds for any arbitrary positive integer $n$. 
 \end{remark}

 \section{Final Remarks}
 
 \noindent
{\bf 1.} A natural continuation of this project is to extend the results in Theorem 2 to recurrences of length greater than $5$.

\noindent
{\bf 2.} Each of the inequalities appearing in Theorem 2 can be interpreted in terms of the positivity/negativity of the discriminant of a certain polynomial with coefficients being explicit functions of the parameters $\alpha, \beta$ and $\gamma$. It would be interesting to have an interpretation of the latter polynomials. One can guess that even for linear recurrences of length greater than $5$, the conditions for having hyperbolic polynomials $P_n(z)$ for all $n \in  \mathbb{N_+}$ can be described as a set of discriminantal inequalities. 
  \medskip
  
  {\bf Acknowledgements.}  I am sincerely grateful to my advisor Professor Boris Shapiro who introduced me to this interesting problem and for many fruitful discussions surrounding it. I also thank Dr. Alex Samuel Bamunoba for the discussions and guidance. I acknowledge and appreciate the financial support from Sida Phase-IV bilateral program with Makerere University 2015-2020 under project 316 `Capacity building in mathematics and its applications'.

  \end{document}